\let\mathbbm\mathbb
\newtheorem{theorem}{Theorem}[section]
\newtheorem{que}[theorem]{Question}
\newtheorem{lemma}[theorem]{Lemma}
\newtheorem{proposition}[theorem]{Proposition}
\newtheorem{corollary}[theorem]{Corollary}
\theoremstyle{definition}
\newtheorem{definition}[theorem]{Definition}
\newtheorem{example}[theorem]{Example}
\theoremstyle{remark}
\newtheorem{remark}[theorem]{Remark}
\numberwithin{equation}{section}
\DeclareMathOperator{\ad}{ad}
\DeclareMathOperator{\Aut}{Aut}
\DeclareMathOperator{\C}{C}
\DeclareMathOperator{\Der}{Der}
\DeclareMathOperator{\Ext}{Ext}
\DeclareMathOperator{\gr}{gr}
\DeclareMathOperator{\GKdim}{GKdim}
\DeclareMathOperator{\Hom}{Hom}
\DeclareMathOperator{\id}{id}
\DeclareMathOperator{\tr}{tr}
\DeclareMathOperator{\lex}{lex}
\DeclareMathOperator{\wlex}{wlex}
\DeclareMathOperator{\LW}{LW}
\DeclareMathOperator{\UM}{UM}
\newcommand{\subalign}[1]{%
  \vcenter{%
    \Let@ \restore@math@cr \default@tag
    \baselineskip\fontdimen10 \scriptfont\tw@
    \advance\baselineskip\fontdimen12 \scriptfont\tw@
    \lineskip\thr@@\fontdimen8 \scriptfont\thr@@
    \lineskiplimit\lineskip
    \ialign{\hfil$\m@th\scriptstyle##$&$\m@th\scriptstyle{}##$\hfil\crcr
      #1\crcr
    }%
  }%
}
\begin{document}

\title[Hopf algebras that are not Ore Extensions of enveloping algebras]{Connected Hopf algebras that are not Hopf Ore Extensions of enveloping algebras}

\author{Mengying Hu}
\address{School of Mathematical Sciences, Fudan University, Shanghai 200433, China}
\email{humy21@m.fudan.edu.cn}

\author{Quanshui Wu}
\address{School of Mathematical Sciences, Fudan University, Shanghai 200433, China}
\email{qswu@fudan.edu.cn}

\thanks{This research has been supported by the NSFC (Grant No. 12471032).}
\subjclass[2020]{
16T05, 
16W70 
}

\keywords{connected Hopf algebra, Hopf Ore extension, coradical filtration, crossed product}

\date{}

\dedicatory{}

\begin{abstract}
We construct a family of connected Hopf algebras with finite Gelfand-Kirillov dimension, none of which is an iterated Hopf Ore extension of the universal enveloping algebra of its primitive part. This provides a negative answer to a question posed by Li and Zhou. It is also demonstrated that these connected Hopf algebras can be formulated as an iterated crossed product of enveloping algebras.  
\end{abstract}

\maketitle

\section{Introduction}
How the coalgebra structure affects the algebra properties of a Hopf algebra is an essential question in the study of Hopf algebras. Suppose $H$ is a Hopf algebra over a field $\mathbb{k}$ with trivial coradical, that is, $H$ is a connected Hopf algebra.  If $H$ is cocommutative, then $H$ is isomorphic to a universal enveloping algebra of a Lie algebra
by Milnor-Moore-Cartier-Kostant Theorem \cite[Theorem
5.6.5]{montgomery1993hopf}. In fact, later Brown and Gilmartin \cite[Question L]{brown2014hopf} asked if any connected Hopf algebra is a universal enveloping algebra of a Lie algebra. It is true when the Hopf algebra is of GK-dimension no larger than $4$ following the classifications \cite{wang2015connected}. Furthermore, all these Hopf algebras are iterated Hopf Ore extensions (IHOE for short) of $\mathbb{k}$. Such kind of results is quite useful for the understanding of the structure of connected Hopf algebras, as the algebraic and homological properties of both enveloping algebras and Ore extensions are rather clear \cite{dixmier1996enveloping,NoetherianMR,liu2014twisted,brown2015connected}. The question was later answered negatively by a counter-example \cite[Theorem 0.5(2)]{BGZ19trans} which is an IHOE but no longer an enveloping algebra of any Lie algebra.
On the other hand, obviously, there are examples that are not IHOEs but enveloping algebras. As a matter of fact, the enveloping algebra of a Lie algebra without a complete flag \cite{brown2015connected} is not an IHOE. These enveloping algebras are the only known examples of connected Hopf algebras that are not IHOEs up to now.  
All known connected Hopf algebras of finite GK-dimension up to now are IHOEs of the enveloping algebras of their primitive parts, including 
\begin{itemize}
    \item the graded ones \cite{zhouRevisited},
    \item the cocommutative ones, which are all enveloping algebras,
    \item the commutative ones, which are polynomial algebras,
    \item all connected Hopf algebras of GK-dimension no more than $4$ \cite{wang2015connected},
    \item the specific example of GK-dimension $5$ in \cite{BGZ19trans}.
\end{itemize} 
So, Li and Zhou \cite[Question C]{zhouRevisited} naturally asked:
\begin{que}\label{conj1}
    Is any connected Hopf algebra $H$ of finite GK-dimension an IHOE of the universal enveloping algebra of its primitive part?
\end{que}
In this paper, we answer this question negatively by constructing a counter-example $\UM(r,2s)$ in Section $3$,  where $r,s$ are non-negative integers such that $r\ge 2s$.

\begin{theorem}[Corollary \ref{CoroMain}]\label{thmintro}
    The connected Hopf algebra $\UM(r,2s)$ is not an IHOE of the universal enveloping algebra of its primitive part. Furthermore, if $r=2s$ and $s\ge 2$, then $\UM(r,2s)$ is not an Ore extension of any Hopf subalgebra of codimension $1$.
\end{theorem}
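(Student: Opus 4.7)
The plan is to extract a structural invariant from the coradical filtration of $\UM(r,2s)$ that is incompatible with an IHOE presentation over $U(P)$, where $P:=P(\UM(r,2s))$; and, in the symmetric range $r=2s$ with $s\ge 2$, to classify codimension-$1$ Hopf subalgebras directly in order to rule out any Hopf Ore extension presentation whatsoever.

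First I would read off from the construction of $\UM(r,2s)$ in Section~3 the primitive part $P$, its Lie bracket, and the GK-dimension. If $\UM(r,2s)\cong U(P)[y_1;\sigma_1,\delta_1]\cdots[y_n;\sigma_n,\delta_n]$ is an IHOE, then necessarily $n=\GKdim\UM(r,2s)-\dim P$, each $H_i:=U(P)[y_1;\sigma_1,\delta_1]\cdots[y_i;\sigma_i,\delta_i]$ is a Hopf subalgebra, and by standard Hopf--Ore theory each $y_{i+1}$ is skew-primitive modulo $H_i$, so $\Delta(y_{i+1})\equiv y_{i+1}\otimes 1+1\otimes y_{i+1}\pmod{H_i\otimes H_i}$. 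Since $P(\UM(r,2s))=P\subseteq U(P)\subseteq H_{i-1}$, no $y_i$ can itself be primitive in $\UM(r,2s)$, so $\omega_i:=\Delta(y_i)-y_i\otimes 1-1\otimes y_i\in H_{i-1}\otimes H_{i-1}$ is nonzero for every $i$. Let $Q$ denote the space of classes in $\UM(r,2s)_2$ whose reduced coproduct lies in $P\otimes P$, taken modulo $\mathbb{k}\oplus P+P\cdot P$ inside $U(P)$. An IHOE presentation over $U(P)$ forces $\dim Q\le n$, with a basis given by the cosets of $y_1,\ldots,y_n$; I expect that the defining relations of $\UM(r,2s)$ produce strictly more independent classes in $Q$ than this bound allows, yielding the desired contradiction.

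For the furthermore statement, I would address all codimension-$1$ Hopf subalgebras $K$ of $\UM(2s,2s)$ at once, not only $U(P)$ and its iterated enlargements. Since a presentation $\UM(2s,2s)=K[x;\sigma,\delta]$ as a Hopf Ore extension requires $\dim P(K)\in\{\dim P-1,\dim P\}$, there are two cases: either $U(P)\subseteq K$ with $x$ lying at the second coradical level, or $P(K)$ is a hyperplane in $P$ with $x$ itself primitive. In both cases I would use the explicit presentation of $\UM(2s,2s)$ to enumerate the candidates for $K$ and to show that no valid triple $(x,\sigma,\delta)$ can produce the remaining relations; the rigidity supplied by $r=2s$ and $s\ge 2$, together with the Lie action on the second-level elements, should force every candidate to fail.

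The main obstacle is precisely this furthermore statement: ruling out \emph{every} codimension-$1$ Hopf subalgebra as a potential base, rather than only $U(P)$ and its iterated Ore enlargements, requires a combined structural and computational effort. The first part, by contrast, should reduce to a clean dimension count in the coradical filtration once $P$ and $\UM(r,2s)_2$ have been made explicit.
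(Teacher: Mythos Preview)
Your dimension count for the first statement does not produce a contradiction. In $\UM(r,2s)$ one has $\GKdim H=2r+1+d$ with $d=\dim_{\mathbb{k}}\mathfrak{so}(B)$ and $\dim_{\mathbb{k}}P=r+1+d$, so $n=\GKdim H-\dim_{\mathbb{k}}P=r$. The quotient $Q$ you describe is exactly the span of the classes of $y_1,\ldots,y_r$ in $(\gr H)_2$ modulo $(\gr H)_1^2$, which has dimension $r$ as well. Thus $\dim Q=n$, and your inequality $\dim Q\le n$ is satisfied rather than violated; there are no ``extra'' second-level classes hiding in the relations.

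What actually obstructs an IHOE presentation over $U(P)$ is not a dimension count but the $\mathfrak{so}(B)$-module structure on the second layer. Any Hopf subalgebra $K$ of codimension $1$ containing $U(P)$ must, after passing to $\gr$, have a PBW generating set of the form $\{x_i\}\cup b(\mathfrak{so}(B))\cup\{y'_1,\ldots,y'_{r-1}\}$ with each $y'_j$ a linear combination of $y_1,\ldots,y_r$. Since $\mathfrak{so}(B)\subseteq K$ and the relation $[y_i,M]=\sum_k M_{ik}y_k$ holds, the span of the $y'_j$ inside $\bigoplus_i\mathbb{k}y_i$ must be an $\mathfrak{so}(B)$-submodule. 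But using explicit elements of $\mathfrak{so}(B)$ (for instance $e_{12},e_{21},e_{1(2i-1)}-e_{(2i)2},e_{(2i-1)(2i)},e_{1(2s+i)}$) one checks that any nonzero vector with a component among $y_1,\ldots,y_{2s}$ generates all of $y_1,\ldots,y_r$ under this action; since $r-1\ge r-2s$, some $y'_j$ must have such a component, forcing $K=H$. This module-theoretic transitivity is the missing ingredient in your plan.

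For the furthermore statement your case split is correct, but the vague ``rigidity supplied by $r=2s$ and $s\ge 2$'' needs to be made precise. The decisive point is this: once one shows $x_0,\ldots,x_{2s}\in K$ (which follows because the degree-$2$ generators of $\gr K$ force $x_0\otimes x_j-x_j\otimes x_0\in(\gr K)_1\wedge(\gr K)_1$), the quotient $P(K)/\mathfrak{L}$ is a codimension-$1$ Lie subalgebra of $P(H)/\mathfrak{L}\cong\mathfrak{sp}_{2s}(\mathbb{k})$. Since $\mathfrak{sp}_{2s}$ is simple, it has no Lie subalgebra of codimension $1$ when $s\ge 2$; this is where the hypothesis $s\ge 2$ enters, and it is the actual obstruction rather than any enumeration of $(\sigma,\delta)$.
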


 As a consequence, there exists a class of connected Hopf algebras that are counter-examples to Question \ref{conj1}. The smallest one among these examples is of GK-dimension 8. Moreover, part of these examples, which are of GK-dimension at least 19, are not IHOEs of any proper Hopf subalgebras.
 
 In Section $5$, we study the structural and homological property of  $\UM(r,2s)$. In  Proposition \ref{PropIterateUL}, we show that
  $\UM(r,2s)$ is an iterated crossed product of enveloping algebras as Hopf algebras. In Proposition \ref{PropCY}, we calculate the Nakayama automorphism of the skew Calabi-Yau algebra $\UM(r,2s)$ by using the filtered deformation technique introduced by Wu and Zhu \cite{wu2021nakayama}. In particular, we show that $\UM(2s,2s)$ is a Calabi-Yau algebra. 

\section{preliminary}
Throughout this paper, all algebras considered are $\mathbbm{k}$-algebras, where  $\mathbbm{k}$ is a field of characteristic $0$. 
A Hopf $\mathbb{k}$-algebra $H$ is called connected if its coradical $H_0=\mathbb{k}$. Such a Hopf algebra can be canonically viewed as a filtered Hopf algebra by its coradical filtration $\{F_m H\}_{m=0}^{\infty}$. The corresponding associated graded Hopf algebra is denoted by $\gr H=\oplus_{m=0}^{\infty} (F_mH/{F_{m-1}}H)$. 
For any $0\ne u\in H$, $u$ is said to be of order $m$ if $u\in F_m H \setminus F_{m-1}H$. For an element $u$ of order $m$, $\overline{u}=u+F_{m-1}\in (\gr H)_m=F_mH/{F_{m-1}}H$ is called the principle symbol of $u$ in literature.

Let $\varepsilon$ be the counit of $H$, $H^+$ be the kernel of $\varepsilon$. For any subspace $K$ of $H$, let $K^+ :=K\cap H^+$. 
Recall that in a connected Hopf algebra $H$, elements of order $1$ in $H^+$ are exactly the primitive elements $P(H)$ and $F_1 H = \mathbb{k} \oplus P(H)$ \cite[Lemma 5.3.2]{montgomery1993hopf}. The set $P(H)$ canonically has a Lie algebra structure.
We refer to  \cite{montgomery1993hopf} as the basic reference for Hopf algebras.

\begin{definition}
 A Hopf algebra $H$ is said to be a Hopf Ore extension (HOE for short) of a Hopf subalgebra $K$ if $H$ is an Ore extension (see \cite[1.9.2]{NoetherianMR}) of $K$ as an algebra. 
 
    A Hopf algebra $H$ is said to be an iterated Hopf Ore extension (IHOE for short) of a Hopf subalgebra $K$ if it contains a chain of Hopf subalgebras:
    \[K=H_{(0)}\subseteq H_{(1)}\subseteq \cdots \subseteq H_{(n)}=H, \]
    such that each $H_{(i)}$ is a Hopf Ore extension of $H_{(i-1)}$. 
    If  $H$ is an IHOE of $K=\mathbb{k}$ then we simply say $H$ is an IHOE.
\end{definition}
If $K$ is a Hopf algebra and $K\subseteq H$ is an Ore extension as an algebra, \cite[Theorem 1.2]{brown2015connected} characterized when $H$ is a Hopf algebra containing $K$ as a Hopf subalgebra. 
For the case that $K$ is connected, see \cite[Theorem 1.3]{brown2015connected}.

Zhuang \cite[Proposition 6.4]{zhuang2013properties} proved that the associated Hopf algebra $\gr H$ of a connected Hopf algebra is commutative (see also \cite[Lemma 5.5]{Andruskiewitsch2000}). If, in addition, $H$ has finite GK-dimension, then more algebra properties are known.
\begin{theorem}\cite[Theorem 6.9]{zhuang2013properties}\label{thmzhuang}
    Let $H$ be a connected Hopf algebra. The following are equivalent.
    \begin{itemize}
        \item[(1)] $\GKdim H  < \infty$;
        \item[(2)] $\GKdim \gr H  < \infty$;
        \item[(3)] $\gr H$ is finitely generated;
        \item[(4)] $\gr H$ is graded isomorphic to a weighted polynomial algebra.
    \end{itemize}
    If one of (1)-(4) holds then $\GKdim H = \GKdim \gr H$. Moreover, $H$ is a noetherian domain of finite global dimension equal to $\GKdim H$.
\end{theorem}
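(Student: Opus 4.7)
The plan is to establish the equivalences by exploiting the key prior fact that the associated graded Hopf algebra $\gr H$ of any connected Hopf algebra is commutative (see e.g.\ \cite{zhuang2013properties, Andruskiewitsch2000}). Thus $\gr H$ is a commutative connected graded Hopf algebra over a field of characteristic zero, and by the Leray--Hopf--Borel structure theorem, any such algebra is a polynomial ring on a family of homogeneous indecomposables. Given this structure theorem, (4) $\Rightarrow$ (3) $\Rightarrow$ (2) is immediate, since a weighted polynomial algebra is finitely generated and finitely generated commutative algebras have finite GK-dimension equal to Krull dimension. For (2) $\Rightarrow$ (4) I would argue that finite GK-dimension of $\gr H$ forces only finitely many indecomposables --- otherwise a polynomial subalgebra on infinitely many independent indecomposables embeds in $\gr H$ and forces $\GKdim \gr H = \infty$ --- so the structure theorem yields a weighted polynomial algebra in finitely many variables.

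For (1) $\Leftrightarrow$ (2) with equality of GK-dimensions, the coradical filtration is compatible with multiplication in a Hopf algebra (Heyneman--Sweedler: $F_m H \cdot F_n H \subseteq F_{m+n} H$), so $H$ is a filtered algebra whose associated graded is $\gr H$. The standard filtered--graded inequality $\GKdim H \le \GKdim \gr H$ applies directly. For the reverse inequality, when $\gr H$ is a finitely generated polynomial algebra one lifts a set of homogeneous generators to $H$ and checks that the growth of the finite-dimensional subspace they span in $H$ matches that of its image in $\gr H$, yielding equality of GK-dimensions.

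For the remaining ring-theoretic assertions, $\gr H$ is a polynomial ring and hence a commutative noetherian domain of finite global dimension (equal to its Krull dimension, which equals its GK-dimension), and each property transfers to $H$ by classical filtered--graded lifting: noetherianness via stabilization of the symbols of an ascending chain of left ideals, the domain property via $\overline{ab} = \overline{a}\cdot\overline{b} \neq 0$ in $\gr H$ for nonzero $a,b \in H$, and finite global dimension via a Rees-ring spectral sequence relating $\Ext^\ast_H$ to $\Ext^\ast_{\gr H}$. The main obstacle I anticipate is the careful deployment of the Leray--Hopf--Borel theorem in (2) $\Rightarrow$ (4), which requires controlling the entire set of indecomposables (not just the primitives) by GK-dimension, together with verifying a priori that $\dim F_n H < \infty$ for every $n$ whenever $\GKdim H < \infty$ --- a technical point that is needed to justify the filtered--graded GK-dimension comparison and that ultimately relies on the commutativity and graded-connectedness of $\gr H$ to rule out infinite-dimensional graded pieces.
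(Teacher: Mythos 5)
First, note that the paper does not actually prove this statement: it is quoted verbatim from \cite[Theorem 6.9]{zhuang2013properties}, with only the accompanying Remark observing that the hypothesis that $\mathbb{k}$ be algebraically closed can be dropped. So there is no in-paper argument to compare against; your sketch has to be measured against the standard proof, which it largely reproduces: commutativity of $\gr H$, a Borel-type structure theorem for connected graded commutative Hopf algebras in characteristic zero, local finiteness of the coradical filtration, and filtered--graded transfer of the ring-theoretic properties. Those parts are sound in outline, and you correctly flag the one delicate point in the equivalences, namely that local finiteness (needed both for the structure theorem and for the GK-dimension comparison) must be established first; this comes from $\dim_{\mathbb{k}} P(H)<\infty$ (since $U(P(H))\hookrightarrow H$) together with the injectivity of the map $(\gr H)_n \to (\gr H)_1 \otimes (\gr H)_{n-1}$ induced by $\Delta$, a coalgebra fact rather than a consequence of commutativity.

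There is, however, one concrete gap in the final clause. For the assertion that the global dimension \emph{equals} $\GKdim H = n$, your Rees-ring spectral sequence only delivers the upper bound $\gldim H \leqslant \gldim \gr H = n$. The lower bound is not a formal consequence of $\gr H$ being a polynomial ring in $n$ variables: the first Weyl algebra $A_1$ has associated graded $\mathbb{k}[x,y]$ but global dimension $1$, so a filtered deformation of a polynomial ring can have strictly smaller global dimension. The equality genuinely uses the Hopf structure: $H$ admits the one-dimensional trivial module $\mathbb{k}_{\varepsilon}$ via the counit, and one must show $\Ext^n_H(\mathbb{k},\mathbb{k}) \neq 0$, for instance by filtering a projective resolution of $\mathbb{k}_{\varepsilon}$ and comparing with the Koszul resolution over $\gr H$, or by invoking Artin--Schelter regularity. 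Without some such step the claim $\gldim H = \GKdim H$ remains unproved in your sketch.
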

\begin{remark}
    This result is proved in \cite{zhuang2013properties} under the assumption that $\mathbb{k}$ is an algebraically closed field, which is unnecessary as remarked in \cite[Remark 3.8, 3.9]{zhou2020structure}.
\end{remark}
As a consequence, to construct a connected Hopf algebra of finite GK-dimension, from the algebra point of view, a priori, we need to construct a set of weighted generators with lower weighted commutators such that the principle symbols of the generating set is the set of polynomial generators of  $\gr H$.
To specify such kind of generating sets, we focus on the properties of the $\mathbb{k}$-basis lifted from $\gr H$. 

\begin{definition}
    A totally ordered set $\mathcal{G}=\{z_1 < z_2 < \dots < z_n\}$ in a $\mathbb{k}$-algebra $A$ is called a PBW generating set of $A$ if  $\{z_1^{d_1}z_2^{d_2}\cdots z_n^{d_n}\mid d_i\in \mathbb{N}\}$ is a $\mathbb{k}$-basis of $A$.
\end{definition}
The PBW generating set  is named from the associated Poincaré-Birkhoff-Witt-type $\mathbb{k}$-basis, which consists of the lexicographically ordered monomials of $\{z_1 < z_2 < \dots < z_n\}$. 
For convenience, an indexed set $\{z_i\}_{i=1}^n$ is called a PBW generating set if it becomes one with the order $z_1 < z_2 < \dots < z_n$.

In fact, there exists a PBW generating set for any connected Hopf algebra of finite GK-dimension following Theorem \ref{thmzhuang}.
\begin{lemma}\label{lemmaPBWexist}
    Let $H$ be a connected Hopf algebra of finite GK-dimension. Then $H$ has a PBW generating set.
\end{lemma}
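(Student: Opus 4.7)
My plan is to lift a set of polynomial generators of $\gr H$ to $H$ and verify that their lex-ordered monomials form a basis by a standard induction on the coradical filtration.

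By Theorem \ref{thmzhuang}, there is an isomorphism of graded algebras $\gr H \cong \mathbb{k}[y_1,\ldots,y_n]$ in which each $y_i$ is homogeneous of some weight $d_i\ge 1$, i.e.\ $y_i\in (\gr H)_{d_i}=F_{d_i}H/F_{d_i-1}H$. For each $i$, I choose a lift $z_i\in F_{d_i}H$ with principal symbol $\overline{z_i}=y_i$, so in particular $z_i$ has order exactly $d_i$. Order the lifts by $z_1<z_2<\cdots<z_n$. I will show that the indexed set $\mathcal{G}=\{z_1,\ldots,z_n\}$ is a PBW generating set, i.e.\ that the monomials $M_{\mathbf{e}}:=z_1^{e_1}z_2^{e_2}\cdots z_n^{e_n}$ with $\mathbf{e}=(e_1,\ldots,e_n)\in \mathbb{N}^n$ form a $\mathbb{k}$-basis of $H$.

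The key observation is that, because the coradical filtration is an algebra filtration, $M_{\mathbf{e}}\in F_{m(\mathbf{e})}H$ where $m(\mathbf{e}):=\sum_{i}e_id_i$, and its principal symbol in $(\gr H)_{m(\mathbf{e})}$ is exactly $y_1^{e_1}\cdots y_n^{e_n}$ (here I use the commutativity of $\gr H$ \cite[Proposition 6.4]{zhuang2013properties}, so no lower-order corrections arise when comparing the image of $M_{\mathbf{e}}$ with the ordered polynomial monomial). For linear independence, suppose $\sum_{\mathbf{e}}\lambda_{\mathbf{e}} M_{\mathbf{e}}=0$ is a nontrivial relation, let $m$ be the maximum of $m(\mathbf{e})$ over the $\mathbf{e}$ with $\lambda_{\mathbf{e}}\ne 0$, and pass to the quotient $F_mH/F_{m-1}H=(\gr H)_m$: this yields a nontrivial relation $\sum_{m(\mathbf{e})=m}\lambda_{\mathbf{e}}\, y_1^{e_1}\cdots y_n^{e_n}=0$ in $\mathbb{k}[y_1,\ldots,y_n]$, contradicting that these are distinct polynomial basis elements.

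For the spanning property I induct on the filtration degree $m$. The base $F_0H=\mathbb{k}$ is clear (it is the span of the empty monomial). Given $u\in F_mH$, its image $\overline{u}\in (\gr H)_m$ is a $\mathbb{k}$-linear combination $\sum_{m(\mathbf{e})=m}\lambda_{\mathbf{e}}\, y_1^{e_1}\cdots y_n^{e_n}$, since the polynomial monomials of total weight $m$ form a basis of $(\gr H)_m$. Then
\[
u-\sum_{m(\mathbf{e})=m}\lambda_{\mathbf{e}}\, M_{\mathbf{e}}\in F_{m-1}H,
\]
which by the induction hypothesis lies in the span of the $M_{\mathbf{e}}$'s, hence so does $u$. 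Combining spanning and linear independence, $\mathcal{G}$ is a PBW generating set of $H$.

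The only mild subtlety—and the place where one must be careful rather than obstructed—is the claim that the principal symbol of a product is the product of the principal symbols; this holds because the coradical filtration is a multiplicative filtration and $\gr H$ is commutative, so reordering the $z_i$'s in $M_{\mathbf{e}}$ introduces only lower-filtration corrections that do not affect the symbol in $(\gr H)_{m(\mathbf{e})}$. Everything else is a formal induction, so I do not anticipate a genuine obstacle.
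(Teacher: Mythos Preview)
Your proof is correct and follows essentially the same approach as the paper: lift homogeneous polynomial generators of $\gr H$ furnished by Theorem~\ref{thmzhuang}, then use the multiplicative filtration to show the ordered monomials span each $F_mH$ by induction and are linearly independent. The only cosmetic difference is that the paper establishes linear independence by a dimension count ($\dim_{\mathbb{k}} F_mH = |S_m|$), whereas you use the equivalent leading-term argument of passing to the top graded piece; both are standard.
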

\begin{proof}
    Suppose $\GKdim H =n$. By Theorem \ref{thmzhuang}, there exists a set 
    \begin{equation}\label{eqPolyG}
    \mathcal{G}:=\{z_i\}_{i=1}^n\subseteq H \text{ such that } \gr H=\mathbb{k}[\overline{z_1},\cdots,\overline{z_n}].
        \end{equation}
That is to say, the set of the principle symbols of elements in $\mathcal{G}$ is a  polynomial generating set of the graded polynomial algebra $\gr H$.
        
        Let $w_i$ be the order of $z_i$ with respect to the coradical filtration. Then the principle symbol $\overline{z_i}\in (\gr H)_{w_i}$.
    For any $m\in \mathbb{N}$, the space $F_m H$ is spanned by the subset 
    $$S_m:=\{z_1^{d_1}z_2^{d_2}\cdots z_n^{d_n}\mid \sum_i d_i w_i\leqslant m, d_i \in \mathbb{N}\},$$
    following an induction on the filtration. Thus $H$ is spanned by $$S:=\bigcup_{m=0}^{\infty} S_m = \{z_1^{d_1}z_2^{d_2}\cdots z_n^{d_n}\mid d_i\in \mathbb{N}\}.$$ 
    Then $H$ is generated by $\{z_i\}_{i=1}^n$ as an algebra.
    Note that
    \[\dim_{\mathbb{k}} F_m H = \dim_{\mathbb{k}} \oplus_{i=0}^m (\gr H)_i = |\{(d_1, d_2, \cdots, d_n) \mid \sum_{i=1}^n d_i w_i \leqslant m\}|\geqslant |S_m|.\]
    It follows that $S_m$ is a $\mathbb{k}$-linear basis of $F_m H$. Since $m$ is arbitrary and the coradical filtration is exhaustive, $S$ is linearly independent and thus a $\mathbb{k}$-basis of $H$.
\end{proof}
Whenever we consider a PBW generating set $\mathcal{G}$ in a connected Hopf algebra $H$, we assume that $\mathcal{G}\subseteq H^+$. If some element $x\in \mathcal{G}\notin H^+$, just replace it with $x-\varepsilon(x)$, by induction the set is still a PBW generating set. Moreover, if $\mathcal{G}$ satisfies \eqref{eqPolyG}, then the replacement still satisfies it. 

Let $(\mathcal{G},\le)$ be a totally ordered set. The following orders are considered on the words $\langle \mathcal{G}\rangle$ of $\mathcal{G}$.
\begin{definition}\label{lex}
    The lexicographic ordering $\leqslant_{\lex}$ is defined as follows. For any two words $u=z_{i_1}z_{i_2}\cdots z_{i_t}$ and $v=z_{j_1}z_{j_2}\cdots z_{j_{t'}}$, $u\leqslant_{\lex}v$ if and only if either
    \begin{itemize}
        \item $u$ is a prefix of $v$ (that is,  $t\leqslant t'$ and $i_s=j_s$ for all $s\leqslant t$), or
        \item there exists some $s \leqslant \textrm{min}\{t, t'\}$ such that $i_\ell=j_\ell$ for all $\ell<s$ and $i_s<j_s$.
    \end{itemize} 
\end{definition}

\begin{definition}\label{wlex}
    Given a weight map $w:\mathcal{G}\to \mathbb{Z}_+$, each word $u=z_{i_1}z_{i_2}\cdots z_{i_r}\in \langle \mathcal{G}\rangle$ is equipped with a weight defined by $\omega(u)=\sum_{s=1}^t \omega(z_{i_s})$. The corresponding weighted lexicographic ordering $\leqslant_{\wlex}$ is defined as:
    
\quad $u\leqslant_{\wlex} v$ if an only if either
    \begin{itemize}
        \item $\omega(u)< \omega(v)$, or
        \item $\omega(u)=\omega(v)$ and $u\leqslant_{\lex}v$.
    \end{itemize}
\end{definition}
With an order given, one can define the leading word of a polynomial $f\in k\langle\mathcal{G}\rangle$, which is denoted by $\LW(f)$ in this paper.

Let $F=\mathbb{k}\langle\mathcal{G}\rangle$ be the free algebra over the ordered set $\mathcal{G}=\{z_1 < z_2 < \dots < z_n\}$ and $I$ be the ideal generated by $\{z_jz_i - z_iz_j + f_{ij}\mid 1\leqslant i < j\leqslant n\}$, where $\{f_{ij}\}$ are polynomials in $F$. Let $A=F/I$ be the quotient algebra. We sort out a special case of the Diamond Lemma to give an example of $\overline{\mathcal{G}}$ being a PBW generating set of $A$. 
For a quick and nice introduction of the Diamond Lemma and the reduction system we refer to \cite[I.11]{Brown2002LecturesOA}.
For the ordered set $\mathcal{G}$ and the ideal $I$ given above, the corresponding reduction system is $\Re=\{(z_jz_i,z_iz_j - f_{ij})\mid 1\leqslant i < j\leqslant n \}$, where any word $uz_jz_iv$ with $u,v\in \langle\mathcal{G}\rangle$ and $i<j$ can be reduced to $u(z_iz_j-f_{ij})v$. We use the notion $u \xrightarrow{\Re} f$ to denote that the word $u$ can be reduced to $f$  by the reduction system $\Re$.  Reductions can be extended linearly to polynomials in $\mathbb{k}\langle\mathcal{G}\rangle$.

\begin{lemma}\label{PBWlem}
       Let $A=\mathbb{k}\langle \mathcal{G} \rangle/I$, $w: \mathcal{G} \to \mathbb{Z}_+$ be a weight map, and  $\Re$ be the corresponding reduction system given as above. Set $f_{ji}=-f_{ij}$ formally, and $f_{ijk}=[f_{ij},z_k]$.
 Then $\{\overline{z_i}\}_{i=1}^n$ is a PBW generating set of $A$ provided the following 
 conditions hold:
    \begin{itemize}
        \item[(1)] $\omega(z_i) \leqslant \omega(z_j)$, if $i < j$;
        \item[(2)] $\omega(\LW(f_{ij})) < \omega(z_i) + \omega(z_j)$;
        \item[(3)] for all $1 \leqslant i < j < k \leqslant n$,  $f_{ijk}+f_{jki}+f_{kij} \xrightarrow{\Re}0$.
    \end{itemize}
    As a consequence, $A$ is an algebra of GK-dimension $n$ with a PBW basis $$\{\overline{z}_1^{d_1} \overline{z}_2^{d_2}\cdots\overline{z}_n^{d_n}\mid d_i\in \mathbb{N}\}.$$
\end{lemma}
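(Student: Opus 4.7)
The plan is to apply Bergman's Diamond Lemma to the reduction system $\Re$. Once confluence and termination are established, the irreducible words under $\Re$ are exactly the ordered monomials $\overline{z}_1^{d_1}\cdots\overline{z}_n^{d_n}$, and this will simultaneously deliver the PBW assertion and the identification $\GKdim A=n$.

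For termination I would check that $\leqslant_{\wlex}$ is a well-founded semigroup partial order on $\langle\mathcal{G}\rangle$ compatible with $\Re$. Under the single rewrite $z_jz_i\mapsto z_iz_j-f_{ij}$ with $i<j$, the monomial $z_iz_j$ has the same weight as $z_jz_i$ but is strictly $<_{\lex}$ to it, so $<_{\wlex}$ to it; meanwhile, condition~(2) ensures that every monomial appearing in $f_{ij}$ has weight strictly less than $\omega(z_i)+\omega(z_j)$. (Condition~(1) enters precisely to force the lex ordering of generators to be compatible with the weight.) Hence every reduction step strictly decreases every summand with respect to $\leqslant_{\wlex}$, and termination follows because $\leqslant_{\wlex}$ admits no infinite descending chain on monomials of bounded weight.

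I would then enumerate the ambiguities of $\Re$. Every left-hand side has length two, so there are no inclusion ambiguities, and the overlap ambiguities are precisely the words $z_kz_jz_i$ with $i<j<k$. Running the two reduction paths starting from $z_kz_jz_i$ — first reducing the prefix $z_kz_j$ versus first reducing the suffix $z_jz_i$ — and carrying each to the point where the common leading term $z_iz_jz_k$ has been extracted, a short bookkeeping calculation yields that the two results differ by
\[
-\bigl([f_{ij},z_k]+[f_{jk},z_i]+[f_{ki},z_j]\bigr)=-(f_{ijk}+f_{jki}+f_{kij}),
\]
where the formal antisymmetry convention $f_{ji}=-f_{ij}$ is used only to rewrite $[f_{ki},z_j]$ as $f_{kij}$. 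By hypothesis~(3) this difference reduces to $0$ under $\Re$, so every ambiguity is resolvable.

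The principal obstacle is the overlap bookkeeping itself: each commutator $[z_\bullet,f_{\bullet\bullet}]$ produced along the way must be interpreted inside the quotient through further applications of $\Re$, and one must see cleanly that the two paths agree up to the Jacobi-type expression appearing in condition~(3). With confluence established, the Diamond Lemma identifies the set of irreducible words — those containing no subword $z_jz_i$ with $i<j$, i.e.\ $\{\overline{z}_1^{d_1}\cdots\overline{z}_n^{d_n}\mid d_i\in\mathbb{N}\}$ — as a $\mathbb{k}$-basis of $A$, which is the PBW assertion. Finally, filtering $A$ by $\omega$ produces an associated graded algebra isomorphic to the commutative polynomial algebra $\mathbb{k}[\overline{z}_1,\ldots,\overline{z}_n]$ by condition~(2), and the latter has GK-dimension $n$, so $\GKdim A=n$.
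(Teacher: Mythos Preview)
Your proposal is correct and follows essentially the same approach as the paper: both invoke Bergman's Diamond Lemma, verify that $\leqslant_{\wlex}$ is a compatible semigroup partial order satisfying DCC via conditions~(1)--(2), observe that there are no inclusion ambiguities, and resolve the overlap ambiguities $z_kz_jz_i$ by showing that the two reduction paths differ by the Jacobi-type expression handled by condition~(3). Your filtration argument for $\GKdim A=n$ is a small elaboration beyond what the paper writes explicitly, but otherwise the two proofs are the same.
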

\begin{proof}
    The lemma is a direct corollary of the Diamond Lemma \cite[I.11.6]{Brown2002LecturesOA}. We only need to prove that the weighted lexicographic ordering $\leqslant_{\wlex}$ corresponding to $\omega$ is a semigroup order compatible with $\Re$ satisfying the DCC, and all the ambiguities with respect to the system are resolvable. 
    
    The weighted lexicographic ordering shares all nice properties we need with the length lexicographic ordering (see \cite[I.11.3]{Brown2002LecturesOA}). The conditions (1) and (2) guarantee that $z_i z_j - f_{ij}$ is a linear combination of words less than $z_jz_i$, $\forall \, i<j$. In other words, $\leqslant_{\wlex}$ is compatible with $\Re$. 
    
    There are no inclusion ambiguities in $\Re$ following the definition. For any $i<j<k$, the overlap ambiguity on $z_kz_jz_i$ means that there are two different reductions
    $$z_kz_jz_i \xrightarrow{\Re} (z_jz_k - f_{jk})z_i \textrm{ and } z_kz_jz_i \xrightarrow{\Re} z_k(z_iz_j - f_{ij}).$$  
    It follows from the condition (3) that any two polynomials in $\mathbb{k}\langle\mathcal{G}\rangle$ can be reduced to the same end provided their difference is contained in the ideal generated by $\{[f_{ij},z_k] + [f_{jk},z_i] + [f_{ki},z_j] \mid 1 \leqslant i<j<k \leqslant n\}$,  which shows the last reductions in the following diagram:\\

\begin{tikzcd}[column sep=small]\label{diagramDiamond}
& z_kz_jz_i \arrow[ld,"\Re"'] \arrow[rd,"\Re"] &\\
z_jz_kz_i - f_{jk} z_i \arrow[d,"\Re"']  &   & z_kz_iz_j- z_kf_{ij} \arrow[d,"\Re"]\\
z_jz_iz_k + z_jf_{ki} - f_{jk}z_i \arrow[d,"\Re"'] &  & z_iz_kz_j+ f_{ki}z_j- z_kf_{ij} \arrow[d,"\Re"] \\
z_iz_jz_k - f_{ij}z_k + z_jf_{ki} - f_{jk}z_i \arrow[rr, "\text{reducible difference}",dashed,no head] \arrow[rd, " \Re"', dashed] &   & z_iz_jz_k - z_if_{jk} + f_{ki}z_j- z_kf_{ij} \arrow[ld, "\Re", dashed]\\
&\bullet &
\end{tikzcd}\\
\end{proof}

\section{Umbrella Hopf Algebras}
In this section, we construct a new class of examples of connected Hopf algebras of finite GK-dimension. The idea comes from the efforts to classify connected Hopf algebras generated as an algebra by the second layer of the coradical filtration. We say a connected Hopf algebra $H$ is $m$th-layer generated if it is generated by $F_m H$ as an algebra. Fix a PBW generating set $\mathcal{G}$ as in Lemma \ref{PBWlem}. Note that $\mathcal{G}$ satisfies \eqref{eqPolyG}, which means its set of principle symbols is a polynomial generating set of $\gr H$. As shown in the proof of \cite[Lemma 2.6]{wang2015connected}, we may assume that any element $y$ of order $2$ in $\mathcal{G}$ satisfies that
\begin{align}\label{eqantisym}
    \Delta y \in  y\otimes 1 + 1\otimes y + P(H)\wedge P(H),\end{align}
where the wedge product means the term is an anti-symmetric form. 
Following the notations in \cite{wang2015connected}, we denote by $\delta f :=\Delta f - f\otimes 1 - 1\otimes f$ for any element $f\in H$. We always assume that 
$\mathcal{G}\subseteq H^+.$
For convenience, let $\mathcal{G}_m$ be the subset $\mathcal{G}\cap(F_m H \setminus F_{m-1} H)$.
Then, $\mathcal{G}_1$, which is denoted by $\{x_1, x_2, \cdots, x_r\}$ sometimes, spans the space of all primitive elements $P(H)$. In fact, if $\delta y = \sum_{i,j} \alpha_{ij} x_i \otimes x_j,$ then
it suffices to replace $y$ by $y - \sum_{i,j} \frac{1}{2}\alpha_{ij} x_i x_j$ to meet \eqref{eqantisym}. After the replacement, $\mathcal{G}$ still satisfies \eqref{eqPolyG}, thus $\mathcal{G}$ is still a PBW generating set of $H$ by Lemma \ref{lemmaPBWexist}.

Such an anti-symmetric form is very helpful in the calculations in a $2$nd-layer generated Hopf algebra. For example, one can easily distinguish the elements in the subspace $\mathbb{k}\mathcal{G}_2$ from other elements in $F_2 H$ by its image under $\delta$. Being more precisely,  $\delta(P(H)^2)$ and $\delta(\mathcal{G}_2)$ are symmetric and anti-symmetric tensor forms respectively,   
and $\delta(P(H)^2)\cap \delta(\mathbb{k}\mathcal{G}_2) =0$.
It is worth mentioning that an element $u\in H$ could be determined by $\delta(u)$ up to a difference within $\ker \delta = P(H)$.

We need more effort to distinguish one element $y$ in the generating subset of order $2$ from another. Consider the lexicographic order on $\{x_ix_j\mid 1 \leqslant i,j \leqslant r\}$ as words of length $2$ over $P(H)$. Then $\delta y = \sum_{i>j} \alpha^y_{ij} (x_i\otimes x_j - x_j\otimes x_i)$ has a leading term $x_{i_y}\otimes x_{j_y} - x_{j_y}\otimes x_{i_y}$, where $x_{i_y}x_{j_y}=\max\{x_ix_j\mid \alpha^y_{ij}\ne 0\}$.
In fact, through a certain chosen method of $\mathcal{G}$, the leading term of $\delta(y)$ of $y\in \mathcal{G}_2$ will not occur in the coproduct of any other $y'\in \mathcal{G}_2$. Let us explain this next. Suppose $\mathcal{G}_2=\{y_1,y_2,\cdots ,y_t\}$ and denote $x_{i_{y_p}}x_{j_{y_p}}$ by $x_{i_p}x_{j_p}$ and $\alpha^{y_p}_{ij}$ by $\alpha^{p}_{ij}$. We may assume that
\[x_{i_1}x_{j_1}\leqslant x_{i_2}x_{j_2} \leqslant \cdots \leqslant x_{i_t}x_{j_t}. \]
If $x_{i_\ell}x_{j_\ell}< \cdots < x_{i_t}x_{j_t}$ for some $\ell$,
and $x_{i_k}x_{j_k}= x_{i_\ell}x_{j_\ell}$ for some $k<l$, then replace $y_k$ by $y_k-\frac{\alpha^k_{i_kj_k}}{\alpha^\ell_{i_\ell j_\ell}} y_\ell$.
In finitely many steps, we may assume that the leading terms satisfy
\begin{align}\label{order of leading terms}
x_{i_1}x_{j_1}< x_{i_2}x_{j_2} < \cdots < x_{i_t}x_{j_t}. 
\end{align}

To insure that the leading term $x_{i_k}x_{j_k}$ will not appear in the coproduct of any other $y_l$ with $l\ne k$, next we do similar operation as above. 

Obviously, $\alpha^l_{i_1j_1}=\alpha^l_{i_2j_2}=\cdots=\alpha^l_{i_{l-1}j_{l-1}}=0$ for any $l$. 
If $\alpha^k_{i_lj_l}\ne 0$ for some $k>l$, then by replacing $y_k$ by $y_k-\frac{\alpha^k_{i_lj_l}}{\alpha^l_{i_lj_l}} y_l$,  we may have  $\alpha^k_{i_lj_l}=0$. By induction, we may assume that $\alpha^k_{i_lj_l}=0$ for any $k\ne l$.
In other words, the pair $(x_{i_y},x_{j_y})$ would be the unique feature of $y$. By the feature, it is relatively easy to decide the coefficients of $y$ as an element of  the PBW basis for any element in $F_2 H$. Moreover, the coproducts of generators in higher layers of the coradical filtration have close relation with the commutativity of the elements in $F_1 H$ in turn, see \cite[Proposition 4.17]{wang2015connected}
as a detailed example, where the coproduct of generators in $\mathcal{G}_3$ forces $P(H)$ to be a trivial Lie algebra. In the $2$nd-layer generated cases of our concern, if we collect the set of these feature pairs together as
\begin{equation}\label{equLambda}
    \Lambda = \{(x_{i_y},x_{j_y})\mid y\in \mathcal{G}_2\},
\end{equation}
the larger the set is, the more commutative $P(H)$ as a Lie algebra is. The following example, with  $|\mathcal{G}_1|=3$, provides an intuition for such property. 
\begin{example}
    Suppose $H$ is $2$-layer generated and $P(H)=\sum_{i=1}^3 \mathbb{k} x_i$. Since $P(H)\wedge P(H)$ is of linear dimension $3$, $|\mathcal{G}_2|\leqslant 3$.
    
    \bullet Case $\mathcal{G}_2=\emptyset$. $P(H)$ can be any Lie algebra of dimension $3$.

    \bullet Case  $\mathcal{G}_2 = \{y\}$. Then $|\Lambda|=1$, and  \cite[Theorem 3.5]{wang2015connected} listed all possible structures of $H$. Among which, all but case (b) have non-trivial $P(H)$. Actually, the possible Lie algebra structures of these primitive parts include all Lie algebras of dimension $3$ but $\mathfrak{sl}_2$.

    \bullet Case $|\mathcal{G}_2|=2$. Then $|\Lambda|=2$. By some tedious but basic calculations, $P(H)$ is either a trivial Lie algebra or isomorphic to the one with:
    \[[x_1,x_2] = 2x_2,[x_1,x_3]=x_3,[x_2,x_3]=0.\]

    \bullet Case $|\mathcal{G}_2|=3$. Then $|\Lambda| = 3$, and  $P(H)$ must be a trivial Lie algebra.
\end{example}
Obviously, if $P(H)$ is commutative, then, by \cite[Lemma 2.5]{wang2015connected}, $H$ is an enveloping algebra. 

The following lemma shows that such commutativity affects to what extent of the commutativity of the higher layers in the coradical filtration.
\begin{proposition}\label{commutator-corad-fil}

   Let $H$ be a connected Hopf algebra. For a fixed positive integer $k$, the following are equivalent.
    \begin{enumerate}
       \item $[F_m H,F_l H]=0$ for any $m+l\leqslant k$ with $m,l\geqslant 0$.
        \item $[F_m H,F_l  H]\subseteq F_{m+l-k} H$ for any integer $m,l\geqslant 0$.
    \end{enumerate}
\end{proposition}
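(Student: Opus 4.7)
The plan is to prove the two directions separately.

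For (2) $\Rightarrow$ (1), the argument is immediate from the counit. If $m+l \leq k$ then (2) places $[F_mH, F_lH]$ inside $F_{m+l-k}H$, which is $0$ when $m+l < k$ and equals $\mathbb{k} = F_0H$ when $m+l = k$. Since $\varepsilon([a,b]) = 0$ for all $a,b \in H$ and $\varepsilon$ is the identity on $\mathbb{k}$, any scalar-valued commutator must vanish.

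The substance is in showing (1) $\Rightarrow$ (2), which I would attack by induction on $n := m+l$. The base case $n \leq k$ is condition (1) itself. For $n > k$, fix $a \in F_mH$, $b \in F_lH$ and write
\[
\Delta a = 1 \otimes a + a \otimes 1 + \sum a'_{(1)} \otimes a'_{(2)}, \qquad \Delta b = 1 \otimes b + b \otimes 1 + \sum b'_{(1)} \otimes b'_{(2)},
\]
with $a'_{(1)} \in F_iH$, $a'_{(2)} \in F_{m-i}H$ for some $1 \leq i \leq m-1$ and analogously for $b$. Expanding $\Delta[a,b] = (\Delta a)(\Delta b) - (\Delta b)(\Delta a)$ in the tensor-product algebra $H \otimes H$, the pure ``cross'' terms $[1 \otimes a, b \otimes 1]$ and $[a \otimes 1, 1 \otimes b]$ vanish, and the rest rearranges to
\[
\Delta[a,b] = 1 \otimes [a,b] + [a,b] \otimes 1 + T,
\]
where $T$ is a sum of six families: $b'_{(1)} \otimes [a, b'_{(2)}]$, $[a, b'_{(1)}] \otimes b'_{(2)}$, $a'_{(1)} \otimes [a'_{(2)}, b]$, $[a'_{(1)}, b] \otimes a'_{(2)}$, $[a'_{(1)}, b'_{(1)}] \otimes a'_{(2)} b'_{(2)}$, and $b'_{(1)} a'_{(1)} \otimes [a'_{(2)}, b'_{(2)}]$. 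In each term at least one factor is a commutator whose arguments' filtration indices sum to strictly less than $n$; the inductive hypothesis then bounds that commutator in the expected $F_{\bullet} H$, and whenever its arguments' indices sum to at most $k$, condition (1) forces it to vanish outright. A case-by-case degree check shows every surviving term lies in $H \otimes F_{n-k-1}H$. Together with $1 \otimes [a,b] \in \mathbb{k} \otimes H$ and $[a,b] \otimes 1 \in H \otimes F_{n-k-1}H$ (using $n-k-1 \geq 0$), this yields
\[
\Delta[a,b] \in \mathbb{k} \otimes H + H \otimes F_{n-k-1}H.
\]
The wedge characterization of the coradical filtration for connected Hopf algebras, $F_dH = \Delta^{-1}(\mathbb{k} \otimes H + H \otimes F_{d-1}H)$ for $d \geq 1$, applied with $d = n-k$, then gives $[a,b] \in F_{n-k}H$.

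The main obstacle is the bookkeeping in showing that each of the six term types in $T$ fits into $H \otimes F_{n-k-1}H$. For some types the inductive bound on the commutator factor alone suffices; for others one must first observe that the arguments' indices fall in the regime where (1) delivers outright vanishing. The underlying reason the numerology closes is that removing any non-trivial Sweedler piece from a coproduct costs at least one unit of filtration, so the total filtration drop across tensor factors is always at least $k+1$ relative to the naive bound $m+l$.
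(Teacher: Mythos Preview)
Your proposal is correct and follows essentially the same route as the paper's proof: both directions are handled identically, and for (1)$\Rightarrow$(2) you run the same induction on $m+l$, expand $\Delta[a,b]$ into the same six mixed-commutator families coming from the reduced coproducts, apply the inductive hypothesis (or condition (1) in the low-degree regime) to each, and conclude via the wedge description of the coradical filtration. The only cosmetic difference is that you phrase the final step as landing in $\mathbb{k}\otimes H + H\otimes F_{n-k-1}H$ and invoke $F_dH=\Delta^{-1}(\mathbb{k}\otimes H + H\otimes F_{d-1}H)$ explicitly, whereas the paper records the slightly sharper symmetric containment $\delta[u,v]\in\sum_{i=1}^{n-k-1}F_iH\otimes F_{n-k-i}H$ before citing the same characterization.
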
 
\begin{proof} 
  Since $H$ is connected, $F_0 H\cap H^+ =0$. It follows from $[H,H]=[H^+,H^+]\subseteq H^+$ that $F_0 H\cap [H,H]=0$. So, obviously, (2) implies (1). 
  
  Suppose (1) holds, that is, $[F_m H, F_l  H]=0\subseteq F_{m+l-k} H$ when $m+l\leqslant k$.  We prove (2) by induction. Suppose, for integer $ N\geqslant k$, $[F_m H, F_l  H]\subseteq F_{m+l-k} H$   whenever $m+l\leqslant N$. We prove that $[u,v]\in F_{N+1-k}H$ for any element $u\in F_m H$ and $v\in F_l  H$, where $m+l=N+1$. By the basic property of the coradical filtration \cite[5.2.2]{montgomery1993hopf}, $\delta(u)$, denoted by $u_1{}'\otimes u_2{}'$ below, lies in $\sum_{i=1}^{m-1}F_i H \otimes F_{m-i} H$. Similarly, $\delta(v)=v_1{}'\otimes v_2{}'\in \sum_{j=1}^{l-1}F_j H \otimes F_{l-j} H$. Thus, by a direct computation,
    \begin{align*}
        \Delta [u,v] =& [u,v]\otimes 1 + 1\otimes [u,v] \, + \\
        & [u,v_1{}']\otimes v_2{}' + v_1{}'\otimes [u,v_2{}']+[u_1{}',v]\otimes u_2{}' \, + u_1{}'\otimes [u_2{}',v] \, + \\
        &[u_1{}',v_1{}']\otimes u_2{}'v_2{}' + v_1{}'u_1{}'\otimes[u_2{}',v_2{}'].
        \end{align*} 
      Hence
        \begin{align*}
        \delta [u,v] = & \Delta [u,v]  -  [u,v]\otimes 1 - 1\otimes [u,v] \, \in \\
& \sum_{j=1}^{l-1}( [F_{m}H,F_j H] \otimes F_{l-j}H +F_j H  \otimes [F_m H,F_{l-j} H]) \, + \\
       &\sum_{i=1}^{m-1} ([F_i H,F_l  H] \otimes F_{m-i}H +F_i H  \otimes [F_{m-i} H,F_l  H]) \, + \\
        &\sum_{\substack{1\leqslant i\leqslant m-1\\ 1\leqslant j\leqslant l-1}} ([F_i H,F_j H]\otimes F_{m+l-i-j}H + F_{i+j}H \otimes [F_{m-i} H,F_{l-j} H]).
    \end{align*}
     It follows from the induction hypothesis that
    \begin{align*}  
   \delta [u,v] & \in \\
& \sum_{j=k+1-m}^{l-1} F_{m+j-k}H \otimes F_{l-j}H + \sum_{j=1}^{N-k} F_j H  \otimes F_{N+1-j-k}H \, + \\
       &\sum_{i=k+1-l}^{m-1} F_{i+l-k} H \otimes F_{m-i}H + \sum_{i=1}^{N-k} F_i H  \otimes F_{N+1-i-k}H \, + \\
        &\sum_{(i+j)=k+1}^{N-1} F_{i+j-k}H\otimes F_{N+1-i-j}H + \sum_{(i+j)=2}^{N-k} F_{i+j}H \otimes F_{N+1-i-j-k}  H.
    \end{align*}
    Therefore, $\delta [u,v]\in \sum_{i=1}^{N-k} F_{i}H\otimes F_{N+1-i-k}H$. It follows from the  definition of the coradical filtration that $[u,v]\in F_{N+1-k}H$. Hence (2) holds.
\end{proof}

The following is \cite[Proposition 6.4]{zhuang2013properties}, which is a corollary of Proposition \ref{commutator-corad-fil}.

\begin{corollary}
    Let $H$ be a connected Hopf algebra. Then the associated graded $\gr H$ of $H$ with respect to the coradical filtration is commutative.
\end{corollary}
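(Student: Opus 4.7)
The plan is to apply Proposition \ref{commutator-corad-fil} with $k = 1$ and observe that the commutativity of $\gr H$ is literally the conclusion one gets. Unpacking the definition: $\gr H$ is commutative iff for every $m, l \geqslant 0$ and every $u \in F_m H$, $v \in F_l H$, the element $\overline{uv} - \overline{vu} = \overline{[u,v]}$ vanishes in $(\gr H)_{m+l}$, which is equivalent to the containment $[F_m H, F_l H] \subseteq F_{m+l-1} H$ for all $m, l \geqslant 0$. This is exactly condition (2) of Proposition \ref{commutator-corad-fil} in the case $k = 1$.

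So the only thing that needs to be checked is the hypothesis (1) of the proposition with $k = 1$, namely that $[F_m H, F_l H] = 0$ for all $m, l \geqslant 0$ with $m + l \leqslant 1$. There are only three pairs $(m,l)$ to consider: $(0,0)$, $(1,0)$, and $(0,1)$. Since $H$ is connected, $F_0 H = \mathbbm{k} \cdot 1$ lies in the center of $H$, so each of these commutators is trivially zero. Hence (1) holds and Proposition \ref{commutator-corad-fil} yields (2), which is the desired commutativity of $\gr H$.

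There is no real obstacle here; the work is already contained in Proposition \ref{commutator-corad-fil}, and this corollary is purely the verification of the initial step $k = 1$ of its hypothesis, made trivial by the assumption that $F_0 H = \mathbbm{k}$.
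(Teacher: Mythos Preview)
Your proof is correct and follows exactly the same approach as the paper: verify that $[F_0 H, F_1 H] = 0$ (trivial since $F_0 H = \mathbbm{k}$ is central) and apply Proposition~\ref{commutator-corad-fil} with $k = 1$. Your additional remark unpacking why condition~(2) with $k=1$ is precisely the commutativity of $\gr H$ is a helpful clarification that the paper leaves implicit.
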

\begin{proof}
   It follows from $[F_0 H,F_1 H]=0$ and Proposition \ref{commutator-corad-fil} where $k=1$.
\end{proof}

If the Hopf algebra $H$ itself is non-commutative, then there is a maximal integer $k$ satisfying the conditions (1) or (2) in Proposition \ref{commutator-corad-fil}. Following \cite[(5.2)]{wu2021nakayama}, the commutative graded algebra $\gr H$ is a Poisson algebra with a non-trivial bracket
\[\{\overline{u},\overline{v}\}:=[u,v] + F_{m+l-k-1} H, \overline{u}\in (\gr H)_m,\overline{v}\in (\gr H)_l.\]
Later in \S $5$ we will see that the method developed in \cite{wu2021nakayama} enables us to calculate the Nakayama automorphism of the example.

 If the Lie algebra $P(H)$ is trivial, then $[F_1 H,F_1 H] = 0$. As a consequence of Proposition \ref{commutator-corad-fil}, $[F_2 H,F_1 H]\subseteq F_1 H$. For any generator $y\in \mathcal{G}_2$, it is direct to check that the subalgebra $K$ generated by $P(H)\cup \{y\}$ has $\mathcal{G}_1\cup \{y\}$ as a PBW generating set. Thus, $K$ would be an HOE of the primitive part, which advances the process for $H$ to be an IHOE of $U(P(H))$. The complexity of finding counter-examples will be greatly increased in such cases. 
 So, we, on the other hand, consider the situation where $P(H)$ decomposes into the direct sum of two Lie subalgebras such that one of them never occurs in the coproduct of any generator in $\mathcal{G}_2$ so that it could be relatively free in the algebra structure. Denote this Lie subalgebra of $P(H)$ by $\mathfrak{s}$ and its supplement by $\mathfrak{L}=\mathbb{k}x_1 \oplus \mathbb{k}x_2 \oplus \cdots \oplus \mathbb{k}x_r$. 
 For convenience we also denote $\mathcal{G}_2  =\{y_i\}_{i=1}^t$. Suppose that $\Delta(y_i)= y_i\otimes 1 + 1\otimes y_i + \sum_{j<k}\alpha_{j k}(x_j\otimes x_k - x_k\otimes x_j)$. Then its commutator with an element $M\in \mathfrak{s}$ satisfies:
\begin{align}
    \Delta[y_i,M] - [y_i,M]\otimes 1 - 1\otimes [y_i,M] = \sum_{j<k}\alpha_{j k} \big([x_j,M]\otimes x_k -x_j\otimes [x_k,M]\big).\label{equationMypre}
\end{align}
Now that $\mathfrak{s}$ will not occur in an anti-symmetric form in the coproduct of any element in $F_2H$, it is natural to make the further assumption that $[\mathfrak{s}, \mathfrak{L}]\subseteq \mathfrak{L}$. 
As a consequence, $\mathfrak{L}$ is a Lie module of $\mathfrak{s}$. The property of $\mathfrak{L}$ as Lie $\mathfrak{s}$-module is of our main interest, as it also deeply affects $\mathcal{G}_2$ by \eqref{equationMypre}. The perspective is further focused to the case that $\mathfrak{s}\hookrightarrow \mathfrak{gl}_r(\mathbb{k})$.

Now, we introduce the following construction of an algebra to give an answer to Question \ref{conj1}.
\begin{example}[$\UM(A)$]\label{example}
    Let $A\in M_{r}(\mathbb{k})$ be a fixed anti-symmetric matrix. Then $\mathfrak{so}(A):= \{M\in M_{r}(\mathbb{k}) \mid MA=-AM^T\}$ is a Lie subalgebra of $\mathfrak{gl}_r(\mathbb{k})$. 
    In fact, $\mathfrak{so}(A) = \{M\in M_{r}(\mathbb{k}) \mid MA=(MA)^T\}$.
    Suppose $b(\mathfrak{so}(A))$ is a $\mathbb{k}$-basis of $\mathfrak{so}(A)$. Let $\UM(A)$ be the algebra generated by the set $\mathcal{G}=\{x_i\}_{i=0}^{r}\cup \{y_i\}_{i=1}^{r}\cup b(\mathfrak{so}(A))$ subject to the relations \eqref{relation1}-\eqref{relationlast}. The Lie bracket of $\mathfrak{so}(A)$ is denoted by $\{-,-\}$ to distinguish from the commutator $[-,-]$ in $\mathbb{k}\langle\mathcal{G}\rangle$. The notation $M_{ij}$ is used to denote the element in the $(i, j)$-position in a matrix  $M$.
\begin{align}
    [x_0,-] &, ~ & \label{relation1}\\
    [x_i,x_j] &- A_{ij} x_0,& 1\leqslant i,j \leqslant r,\label{relationxx}\\
    [y_i,y_j] &- \frac{1}{3} A_{ij} x_0^3,& 1\leqslant i,j \leqslant r,\label{relationyy}\\
    [x_i,y_j] &, & 1\leqslant i, j\leqslant r,\label{relationxy}\\
    [x_i,M] &- \sum_{k=1}^{r} M_{ik} x_k,& M\in b(\mathfrak{so}(A)),\ 1\leqslant i \leqslant r, \label{relationMx}\\
    [y_i,M] &- \sum_{k=1}^{r} M_{ik} y_{k}, & M\in b(\mathfrak{so}(A)),\ 1\leqslant i \leqslant r, \label{relationMy} \\
    [M,N] & - \sum_{X \in  b(\mathfrak{so}(A))} \alpha_XX,  & \textrm{ if } \{M, N\} = \sum_{X \in  b(\mathfrak{so}(A))} \alpha_XX, \,M,N\in b(\mathfrak{so}(A)).\label{relationlast} 
\end{align}
Suppose $b(\mathfrak{so}(A))=\{X_i\}_{i=1}^d$, where $d$ is the linear dimension of $\mathfrak{so}(A)$. We fix a total order on $\mathcal{G}$ as
$$x_0<x_1< \dots <x_{r}<X_1< \dots < X_d <y_{1}<y_{2}< \dots <y_{r}.$$  
Consider the weighted lexicographic order on the words $\langle\mathcal{G}\rangle$ with 
\begin{align} \label{wlexH2so}
   w(x_i) = w(X_j)=1, \ 0\leqslant i\leqslant r, 1\leqslant j\leqslant d; \  w(y_i)=2, \  1\leqslant i\leqslant r.
\end{align}
Then the weight together with relations \eqref{relation1}-\eqref{relationlast} satisfy the conditions (1) and (2) in Lemma \ref{PBWlem}.

The reduction system corresponding to relations (\ref{relation1})-(\ref{relationlast}) is denoted as $\Re$. For any generators $z_i<z_j\in \mathcal{G}$, the reduction is denoted as $z_jz_i \xrightarrow{\Re} z_iz_j - f_{z_iz_j}$. For example, 
$$M x_i \xrightarrow{\Re} x_i M - f_{x_iM}, \textrm{ where } f_{x_iM}= \sum_k M_{ik}x_k, \, M\in b(\mathfrak{so}(A)), \ 1\leqslant i \leqslant r.$$

We check the condition (3) in Lemma \ref{PBWlem} for all possible triples of the generators in the following.
\begin{itemize}
\item Case $(x_0,z_i,z_j)$ where $\{z_i, z_j\}\subseteq \mathcal{G}$. 

Obviously, any monomial $u x_0$ can be reduced to $x_0 u$ in finite steps. Then
\begin{align*}
    ~& [f_{x_0z_i},z_j] + [f_{z_iz_j},x_0] + [f_{z_ix_0},z_i]\\
    = & [0, z_j]+f_{z_iz_j}x_0 - x_0f_{z_iz_j} + [0,z_i]\\
\xrightarrow{\Re} & x_0f_{z_iz_j}-x_0f_{z_iz_j}=0.
\end{align*}
\item Case $(z_i,z_j,z_k)$ where $\{z_i,z_j,z_k\}\subseteq \{x_i\}_{i=1}^r\cup \{y_i\}_{i=1}^r$. 

Trivial as  $f_{z_iz_j},f_{z_jz_k},f_{z_kz_i} \in \mathbb{k}[x_0]$.

\item Case $(L,M,N)$  where $L,M,N\in b(\mathfrak{so}(A))$. 

Note that $f_{LM}$ is a linear combination of the elements in $b(\mathfrak{so}(A))$, and so is $[f_{LM},N]$. The condition (3) follows from the Jacobi identity of the Lie algebra $\mathfrak{so}(A)$.
\item Case $(x_i,M,N)$ where $i\geqslant 1$ and $M,N\in b(\mathfrak{so}(A))$.  

By relation \eqref{relationlast}, $f_{MN}=\sum_{X\in b(\mathfrak{so}(A))} \alpha_X X\in M_r(\mathbb{k})$ is the commutator of matrices $M$ and $N$. Then
\begin{align*}
    & [f_{x_iM},N] + [f_{MN},x_i] + [f_{Nx_i},M]\\
    =& [\sum_j M_{ij}x_j,N] + \sum_{X\in b(\mathfrak{so}(A))}\alpha_X [X,x_i] + [\sum_j  -N_{ij}x_j,M]\\
    \xrightarrow{\Re} & \sum_j M_{ij}(x_jN -(x_jN -\sum_k N_{jk}x_k)) +\\
   \hspace{4em}& \sum_X \alpha_X ((x_iX - \sum_j X_{ij}x_j) - x_iX) +
   \sum_j -N_{ij}(x_jM-(x_jM-\sum_k M_{jk}x_k))\\
  =& \sum_{j,k}M_{ij}N_{jk}x_k - \sum_j (\sum_X \alpha_X X_{ij}) x_j  - \sum_{j,k}N_{ij}M_{jk} x_k\\
  =& \sum_k -\{M, N\}_{ik} x_k +\sum_k(MN-NM)_{ik}x_k
=0. 
\end{align*}
Actually the $r$-dimensional space $\bigoplus_{i=1}^r \mathbb{k} x_i$ is the canonical $r$-dimensional right $\mathfrak{so}(A)$-Lie module, which ensures the reduction.

\item Case $(M,N,y_i)$ where $i\geqslant 1$ and $M,N\in b(\mathfrak{so}(A))$. 

Similar to the case $(x_i,M,N)$.
\item Case $(x_i,M,y_j)$ where $1 \leqslant i, j \leqslant r,M\in b(\mathfrak{so}(A))$. 

Trivial by the fact that any $y_j x_i$ can be reduced to $x_i y_j$.

\item Case $(x_i,x_j, M)$  where $1 \leqslant i, j \leqslant r$ and $M\in b(\mathfrak{so}(A))$. 
\begin{align*}
    \hspace{4em}~&[f_{Mx_i},x_j]  + [f_{x_jM},x_i] + [f_{x_ix_j},M]\\
     \hspace{4em}= & -\sum_k M_{ik}[x_k,x_j]  + \sum_k M_{jk}[x_k,x_i] + A_{ij}[x_0,M]\\
     \hspace{4em}\xrightarrow{\Re}& -\sum_{k<j} M_{ik} (x_kx_j -(x_kx_j -A_{kj}x_0))- \sum_{k>j}M_{ik}(x_jx_k - A_{jk}x_0-x_jx_k) \ +\\
     \hspace{4em}~&\ \sum_{k<i} M_{jk}(x_kx_i -(x_kx_i-A_{ki}x_0) ) + \sum_{k>i}M_{jk}(x_ix_k - A_{ik}x_0-x_ix_k) + 0\\
     \hspace{4em}=&-\sum_k M_{ik}A_{kj} x_0 + \sum_k M_{jk}A_{ki} x_0\\
     \hspace{4em}=&-(MA)_{ij}x_0 + (MA)_{ji}x_0 = 0,
     \end{align*}
     where the second last equality follows from the anti-symmetry of $A$, and the last equality follows from that $MA$ is a symmetric matrix for any $M\in\mathfrak{so}(A)$. 

\item Case $(M,y_i,y_j)$ where $1 \leqslant i, j \leqslant r$ and $M\in b(\mathfrak{so}(A))$. 

This is similar to the case $(x_i,x_j, M)$.
\end{itemize}

\begin{corollary}
 $\GKdim(\UM(A))=2r+1+\dim_{\mathbb{k}}\mathfrak{so}(A).$  
\end{corollary}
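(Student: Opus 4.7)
The proposal is to obtain this corollary as a direct application of Lemma \ref{PBWlem} to the data $(\mathcal{G}, \omega, \Re)$ fixed in Example \ref{example}, for which the three hypotheses are exactly what the preceding case analysis establishes. Concretely, I would first observe that condition (1) of Lemma \ref{PBWlem} holds because the weight map in \eqref{wlexH2so} assigns weight $1$ to every $x_i$ and every $X_j$ and weight $2$ to every $y_i$, and these weight-$1$ generators precede the weight-$2$ generators in the chosen total order on $\mathcal{G}$. Condition (2) is then a direct inspection of the defining relations \eqref{relation1}--\eqref{relationlast}: each right-hand side $f_{z_i z_j}$ is a linear combination of words of weight strictly less than $\omega(z_i) + \omega(z_j)$, the one case to notice being $[y_i, y_j]$, whose leading-weight bound $4$ strictly dominates the weight $3$ of $x_0^3$.

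Condition (3) of Lemma \ref{PBWlem} — the resolvability of all overlap ambiguities $z_k z_j z_i$ with $i < j < k$ — is precisely what has been verified in the eight bulleted cases immediately above the corollary; all other triples either reduce to one of those cases by symmetry or fall under the trivial case involving $x_0$ (which commutes with everything) or involving one $x_i$ and one $y_j$ (which commute). Granted these three conditions, Lemma \ref{PBWlem} asserts both that $\{\overline{z}_1^{d_1} \cdots \overline{z}_n^{d_n}\}$ is a PBW basis of $\UM(A)$ and that $\GKdim \UM(A) = n = |\mathcal{G}|$.

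Finally, I would count the cardinality of $\mathcal{G}$: it consists of $r+1$ generators $x_0, x_1, \ldots, x_r$, of $d := \dim_\mathbb{k} \mathfrak{so}(A)$ generators $X_1, \ldots, X_d$ coming from the chosen basis $b(\mathfrak{so}(A))$, and of the $r$ generators $y_1, \ldots, y_r$. This yields $|\mathcal{G}| = (r+1) + d + r = 2r + 1 + \dim_\mathbb{k} \mathfrak{so}(A)$, which is the claimed formula. The only substantive step is the overlap analysis, and that has already been carried out, so no real obstacle remains; the corollary is just the bookkeeping that collects what has been proved.
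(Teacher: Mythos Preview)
Your proposal is correct and is essentially identical to the paper's own one-line proof: the verification of conditions (1)--(3) of Lemma~\ref{PBWlem} is carried out in the text of Example~\ref{example} immediately preceding the corollary, and the paper simply invokes Lemma~\ref{PBWlem} to conclude that $\mathcal{G}$ is a PBW generating set, whence $\GKdim \UM(A) = |\mathcal{G}| = 2r + 1 + \dim_{\mathbb{k}} \mathfrak{so}(A)$.
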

\begin{proof}
  It follows from Lemma \ref{PBWlem} that $\mathcal{G}$ is a PBW generating set of the algebra $\UM(A)$.  
\end{proof}
\end{example}

\begin{proposition}
    The algebra $\UM(A)$ given in Example \ref{example} can be endowed with a Hopf algebra structure as in the following.    
    \begin{align}
        &\{x_i\}_{i=0}^r \cup \mathfrak{so}(A) \subseteq P(\UM(A)), \label{x-M-comultiplication}\\
        &\Delta y_i = y_i\otimes 1 + 1\otimes y_i + x_0 \otimes x_i - x_i\otimes x_0, \, \varepsilon (y_i)=0, 1\leqslant i\leqslant r, \label{y-comultiplication}\\
        &S(y_i) = -y_i , 1\leqslant i\leqslant r \label{y-antipode}.
    \end{align}
    By such coproduct $H=\UM(A)$ is a connected Hopf algebra.
\end{proposition}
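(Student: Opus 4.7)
The plan is to verify, in sequence: (i) that the assignments \eqref{x-M-comultiplication}--\eqref{y-antipode} extend to algebra / anti-algebra maps on $\UM(A)$, by checking that $\Delta, \varepsilon, S$ applied to each defining relation \eqref{relation1}--\eqref{relationlast} vanishes in (the appropriate tensor power of) $\UM(A)$; (ii) coassociativity and the counit axiom on generators; (iii) connectedness via a weight-filtration argument. The existence and form of the antipode will then follow essentially for free. For the relations \eqref{relation1}, \eqref{relationxx}, \eqref{relationMx}, \eqref{relationlast}, every generator involved is primitive, so both sides of each relation already lie in $P(\UM(A))$ and the equality is built into the construction of the generators $\{x_i\}\cup b(\mathfrak{so}(A))$; no coalgebra calculation is required.

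The $y$-relations need real work. Write $\Delta y_i = y_i\otimes 1 + 1\otimes y_i + \omega_i$ with $\omega_i:=x_0\otimes x_i - x_i\otimes x_0$. Relation \eqref{relationxy} reduces to $[\omega_i, x_j\otimes 1 + 1\otimes x_j]=0$, which collapses via $[x_i,x_j]=A_{ij}x_0$ together with centrality of $x_0$. Relation \eqref{relationMy} reduces to $[\omega_i, M\otimes 1 + 1\otimes M] = \sum_k M_{ik}\omega_k$, which follows from $[x_i,M]=\sum_k M_{ik}x_k$ and $[x_0,M]=0$, and this matches the $\omega$-part of $\Delta(\sum_k M_{ik}y_k)$.

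The main obstacle is \eqref{relationyy}. Expanding $[\Delta y_i,\Delta y_j]$, every cross-bracket pairing a $y$-term with an $\omega$-term vanishes by \eqref{relationxy} and centrality of $x_0$, leaving
\[
[\Delta y_i,\Delta y_j] \;=\; \tfrac{1}{3}A_{ij}\bigl(x_0^3\otimes 1 + 1\otimes x_0^3\bigr) + [\omega_i,\omega_j].
\]
A short calculation using $[x_i,x_j]=A_{ij}x_0$ and centrality of $x_0$ gives $[\omega_i,\omega_j] = A_{ij}(x_0^2\otimes x_0 + x_0\otimes x_0^2)$, and summing produces precisely $\tfrac{1}{3}A_{ij}\Delta(x_0^3)$, since $\Delta(x_0^3)=(x_0\otimes 1+1\otimes x_0)^3$. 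This identification is what pins down the coefficient $\tfrac{1}{3}$ in \eqref{relationyy}, and anticipating the cancellation is the main difficulty of the proof.

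Once $\Delta$ is well defined, coassociativity and the counit axiom are checked on generators: both are trivial on primitives, and for $y_i$ each reduces to a direct expansion using primitivity of $x_0$ and $x_i$. Connectedness follows from the weight filtration $\{F_n^w\}$ of \eqref{wlexH2so}, with weights $1$ on $\{x_i\}_{i=0}^r\cup b(\mathfrak{so}(A))$ and $2$ on $\{y_i\}$: by Lemma \ref{PBWlem} this is an exhaustive algebra filtration with $F_0^w=\mathbbm{k}$, and the displayed coproducts show $\Delta F_n^w\subseteq \sum_{i+j=n}F_i^w\otimes F_j^w$, making $\{F_n^w\}$ a bialgebra filtration starting at $\mathbbm{k}$; hence the coradical of $\UM(A)$ equals $\mathbbm{k}$. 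The antipode then exists uniquely on this connected bialgebra, and the formula $S(y_i)=-y_i$ is confirmed by the one-line check $m(S\otimes\id)\Delta y_i = -y_i + y_i - x_0x_i + x_ix_0 = 0 = \varepsilon(y_i)\cdot 1$.
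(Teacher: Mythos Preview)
Your proof is correct and follows essentially the same route as the paper: verify that $\Delta$ respects the defining relations (the paper phrases this as showing the relation ideal $I$ is a Hopf ideal of the free algebra, which is equivalent), with the substantive computation being the $[y_i,y_j]$ relation and the identification of $[\omega_i,\omega_j]$ with the cross terms of $\tfrac{1}{3}A_{ij}\Delta(x_0^3)$.

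Two small methodological differences are worth noting. For connectedness, the paper uses the abstract iterated filtration $\tilde{F}_{i+1}H=\Delta^{-1}(H\otimes\tilde{F}_iH+\tilde{F}_0H\otimes H)$ and appeals to \cite[5.3.4]{montgomery1993hopf}, whereas you use the concrete weight filtration coming from Lemma~\ref{PBWlem}; your argument is more explicit and reuses machinery already in place. For the antipode, the paper checks $S(I)\subseteq I$ directly, while you deduce existence from connectedness and then verify the formula on $y_i$; this is a legitimate and slightly cleaner shortcut.
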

\begin{proof}
    By definition, $\UM(A)$ is the quotient algebra of $\mathbb{k}\langle\mathcal{G}\rangle$ subject to the relations \eqref{relation1}-\eqref{relationlast}. It is routine to check that \eqref{x-M-comultiplication}, \eqref{y-comultiplication} and \eqref{y-antipode} give a Hopf algebra structure on the free algebra $\mathbb{k}\langle\mathcal{G}\rangle$. Let $I$ be the ideal generated by the relations \eqref{relation1}-\eqref{relationlast}. Then $S(I)\subseteq I$. 
    Note that the commutator of two primitive elements is still primitive. For other generators of $I$, 
\begin{align*}
   \Delta([x_0,u])=& [x_0,u_1]\otimes u_2 + u_1\otimes[x_0,u_2],\forall  u \in \mathbb{k}\langle\mathcal{G}\rangle \textrm{ with } \Delta(u)=u_1 \otimes u_2,\\
    \Delta([x_i,y_j])=&[\Delta x_i,\Delta y_j]\\
    =&[x_i,y_j]\otimes 1 + 1\otimes[x_i,y_j] + x_0\otimes [x_i,x_j]-[x_i,x_j]\otimes x_0\\
    =& [x_i,y_j]  \otimes 1 + 1\otimes[x_i,y_j] + \\ &x_0\otimes ([x_i,x_j]-A_{ij}x_0)-([x_i,x_j]-A_{ij}x_0)\otimes x_0,\\
        \Delta([y_i,y_j]-&\frac{1}{3}A_{ij}x_0^3)=[\Delta y_i,\Delta y_j]-\frac{1}{3}(\Delta x_0)^3\\
    =&[y_i,y_j]\otimes 1 + 1\otimes [y_i,y_j] + \\ &x_0^2\otimes[x_i,x_j] + [x_i,x_j]\otimes x_0^2 - \frac{1}{3}A_{ij}(x_0\otimes 1 + 1\otimes x_0)^3\\
    =&([y_i,y_j]-\frac{1}{3}A_{ij}x_0^3)\otimes 1 + 1 \otimes ([y_i,y_j]-\frac{1}{3}A_{ij}x_0^3).
\end{align*}
    \begin{align*}
    \Delta([y_i,M]-&\sum_{k=1}^r M_{ik} y_k) = [\Delta y_i,\Delta M] - \sum_{k=1}^r M_{ik}\Delta y_k \\
    =&[y_i,M]\otimes 1 + 1\otimes [y_i,M] + x_0\otimes [x_i,M] - [x_i,M]\otimes x_0 -\\ &\sum_{k=1}^rM_{ik}(y_k\otimes 1 + 1\otimes y_k + x_0\otimes x_k - x_k\otimes x_0)  \\
    =&([y_i,M] - \sum_{k=1}^rM_{ik}y_k)\otimes 1 + 1\otimes ([y_i,M] - \sum_{k=1}^rM_{ik}y_k) +\\ &x_0\otimes([x_i,M]-\sum_{k=1}^r M_{ik}x_k) - ([x_i,M]-\sum_{k=1}^r M_{ik} x_k)\otimes x_0.\\
    \end{align*}
It follows that $\Delta(I)\subseteq I\otimes \mathbb{k}\langle\mathcal{G}\rangle + \mathbb{k}\langle\mathcal{G}\rangle \otimes I$. Hence $I$ is a Hopf ideal of $\mathbb{k}\langle\mathcal{G}\rangle$.
  It follows that \eqref{x-M-comultiplication}, \eqref{y-comultiplication} and \eqref{y-antipode} induce a Hopf algebra structure on $\UM(A)$.

  Obviously by setting $\tilde{F}_0 H=\mathbb{k}$ and $\tilde{F}_{i+1} H =\Delta^{-1}(H\otimes \tilde{F}_i H + \tilde{F}_0 H \otimes H)$, $\bigcup_{i=0}^{\infty} \tilde{F}_i H$ is an exhaustive coalgebra filtration of $H$. By Lemma \cite[5.3.4]{montgomery1993hopf} the coradical $F_0 H \subseteq \tilde{F}_0 H$, thus $H$ is a connected Hopf algebra.
\end{proof}

Recall that the structure matrix $A$ of $\UM(A)$ is an anti-symmetric matrix. Suppose $A\in M_r(\mathbb{k})$ and rank$(A) = 2s$. Then $A$ has an orthogonal normalized form
\begin{align}\label{matrixDia}
    B=\begin{bmatrix}
    0 & 1 & && &&&\\
    -1 & 0 & && &&&\\
    && \ddots && &&&\\
    &&& 0 & 1 &&&\\
    &&& -1&0 & &&\\
    &&&&&0&&\\
    &&&&&& \ddots& \\
    &&&&&&&0 \\
\end{bmatrix} 
\begin{array}{c}
     \overline{\uparrow}\\  \\ 2s \\  \\ \underline{\downarrow} \\ \overline{\uparrow} \\ r-2s \\ \underline{\downarrow} \\ 
\end{array}.
\end{align}
The following lemma tells us that  $\UM(A)$ actually can be given by its normalized matrix.
\begin{proposition}
 If the matrix $A$ is congruent to another anti-symmetric matrix $A'$, then $\UM(A)\cong \UM(A')$ as Hopf algebras. Specifically, let $B$ be the orthogonal normalization of $A$. Then $\UM(A)\cong \UM(B)$ as Hopf algebras.
\end{proposition}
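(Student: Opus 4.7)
The plan is to construct an explicit Hopf algebra isomorphism $\varphi\colon \UM(A') \to \UM(A)$ from the congruence $A' = P^T A P$, where $P$ is an invertible $r\times r$ matrix over $\mathbb{k}$. Writing primes to distinguish the generators of $\UM(A')$, I will define
\[
\varphi(x_0') = x_0, \quad \varphi(x_i') = \sum_{k=1}^r P_{ki}\, x_k, \quad \varphi(y_i') = \sum_{k=1}^r P_{ki}\, y_k, \quad \varphi(N) = P^{-T} N P^T
\]
for $N \in \mathfrak{so}(A')$, and extend to the free algebra $\mathbb{k}\langle \mathcal{G}'\rangle$. A preliminary computation from $A' = P^T A P$ shows that the conjugation $N \mapsto P^{-T} N P^T$ is a Lie algebra isomorphism $\mathfrak{so}(A') \xrightarrow{\sim} \mathfrak{so}(A)$, so $\varphi(N)$ lands in the correct subspace and is automatically compatible with the Lie bracket entering relation \eqref{relationlast}.

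I will then check that each defining relation \eqref{relation1}--\eqref{relationlast} is killed by $\varphi$. The two governing calculations are
\[
[\varphi(x_i'), \varphi(x_j')] = \sum_{k,l} P_{ki} P_{lj} A_{kl}\, x_0 = (P^T A P)_{ij}\, x_0 = A'_{ij}\, x_0,
\]
which handles \eqref{relationxx} (and analogously \eqref{relationyy} using $x_0^3$), together with the matrix identity $P^T \varphi(N) = N P^T$ which gives
\[
[\varphi(x_i'), \varphi(N)] = \sum_l (P^T \varphi(N))_{il}\, x_l = \sum_l (N P^T)_{il}\, x_l = \varphi\bigl(\textstyle\sum_k N_{ik} x_k'\bigr),
\]
covering \eqref{relationMx} and, analogously, \eqref{relationMy}. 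Relations \eqref{relation1} and \eqref{relationxy} are immediate because $x_0$ and the cross-commutators $[x_i,y_j]$ already vanish. Swapping the roles of $A$ and $A'$ and using $P^{-1}$ in place of $P$ produces a two-sided inverse, so $\varphi$ is an algebra isomorphism.

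For the Hopf structure, the primitive generators $x_0'$, $x_i'$ and $N$ are sent to primitive elements, so the coproduct is automatically respected on them. Only $y_i'$ requires work, and
\[
(\varphi \otimes \varphi)\Delta(y_i') = \sum_k P_{ki}\bigl(y_k \otimes 1 + 1 \otimes y_k + x_0 \otimes x_k - x_k \otimes x_0\bigr) = \Delta(\varphi(y_i'))
\]
follows from the linearity of $\varphi$ on the $y$'s and the $x$'s, and the antipode condition then propagates automatically. The second statement reduces to the classical fact that any anti-symmetric matrix over a field of characteristic zero is congruent to its orthogonal normal form $B$, so applying the first part yields $\UM(A) \cong \UM(B)$. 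The main obstacle here is organizational rather than conceptual: all identities are elementary matrix manipulations, but one must identify the correct twist $P^{-T}(\cdot)P^T$ on $\mathfrak{so}(A')$, which is the unique choice reconciling the way $(x_1,\dots,x_r)$ transforms as a vector with the way $\mathfrak{so}$ acts on it in \eqref{relationMx}.
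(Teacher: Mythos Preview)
Your proof is correct and follows the same strategy as the paper: construct an explicit isomorphism between the two free algebras via the congruence matrix $P$, verify it carries the defining ideal to the defining ideal, and then check compatibility with the coproduct on the non-primitive generators $y_i$.

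There is one point on which your argument is actually sharper than the paper's. The paper begins its proof by asserting that there is an \emph{orthogonal} matrix $P$ with $PAP^T=A'$, and then sends $M\mapsto PMP^T$; the subsequent verification of relation \eqref{relationMx} uses $P^TP=I$ explicitly. Over a general field of characteristic~$0$ congruence does not guarantee an orthogonal $P$, so the paper's map is only literally defined in that special case. You instead take an arbitrary invertible $P$ with $A'=P^TAP$ and use the conjugation $N\mapsto P^{-T}NP^{T}$ on $\mathfrak{so}(A')$, which is exactly the twist that makes the identity $P^T\varphi(N)=NP^T$ hold and hence makes \eqref{relationMx}--\eqref{relationMy} go through without any orthogonality hypothesis. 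When $P$ happens to be orthogonal the two maps coincide, so your argument genuinely subsumes the paper's while remaining the same proof in spirit.
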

\begin{proof}
By assumption, there exists an orthogonal matrix $P$ such that $PAP^T=A'$. Then it is obvious that for any matrix $M\in M_r(\mathbb{k})$, $M\in \mathfrak{so}(A)$ if and only if $PMP^T\in \mathfrak{so}(A')$.
    Consider the set $\mathcal{G}'=\{x_i'\}_{i=0}^r \cup b(\mathfrak{so}(A')) \cup \{y_i'\}_{i=1}^r$ in $\UM(A')$, where $b(\mathfrak{so}(A'))= \{PMP^T\mid M\in b(\mathfrak{so}(A))\}$ is a basis of $\mathfrak{so}(A')$. The algebra $\UM(A')$ is generated by $\mathcal{G}'$ subject to relations parallel to $\eqref{relation1}$-$\eqref{relationlast}$. Consider the following map between the two algebras:
    \begin{align*}
        \Phi:\UM(A)&\to \UM(A'),\\
        x_0 &\mapsto x_0',\\
        x_i &\mapsto \sum_j P_{ji}x_j',\\
        y_i &\mapsto \sum_j P_{ji}y_j',\\
        M &\mapsto PMP^T.\label{mapM}
    \end{align*}
    In fact, the map gives an algebra isomorphism between the free algebras $\mathbb{k}\langle \mathcal{G}\rangle$ and $\mathbb{k}\langle \mathcal{G}'\rangle$. Moreover, it maps exactly the ideal defined by \eqref{relation1}-\eqref{relationlast} in $\mathbb{k}\langle \mathcal{G}\rangle$ to that in $\mathbb{k}\langle \mathcal{G}'\rangle$. For example, for the relation \eqref{relationMx},
    \begin{align*}
        & [x_i, M] - \sum_{k=1}^r M_{ik} x_k  \\
     \mapsto &  
      \sum_{j=1}^r [P_{ji} x'_j,PMP^T]- \sum_{k=1}^r M_{ik}  (\sum_{l=1}^r P_{lk} x'_l)\\ 
     &= \sum_{j=1}^r P_{ji} [x'_j,PMP^T] - \sum_{l=1}^r (MP^T)_{il} x'_l\\
     &= \sum_{j=1}^r P_{ji} [x'_j,PMP^T] -\sum_{l=1}^r (P^T PMP^T)_{il} x'_l\\
     &= \sum_{j=1}^r P_{ji} [x'_j,PMP^T] - \sum_{l=1}^r \sum_{j=1}^r (P^T)_{ij}(PMP^T)_{jl} x'_l\\
       & = \sum_{j=1}^r P_{ji}\left([x'_j, PMP^T] - \sum_{l=1}^r (PMP^T)_{jl} x'_l \right). 
    \end{align*} 
    It follows that $\Phi:\UM(A) \to \UM(A')$ is an algebra isomorphism.
    It is routine to check that $\Phi$ is a Hopf algebra homomorphism.
\end{proof}
The Hopf algebra $\UM(B)$ defined above is named as the \textit{Umbrella Hopf algebra} for its structural properties. Recall that in a $2$nd-layer generated connected Hopf algebra $H$ with a fixed generating set $\mathcal{G}$, we define a set $\Lambda$ to characterize its coproduct of the generators in $\mathcal{G}_2$. Figures \ref{figoc} and \ref{figUm} below show, from the top and bottom view respectively, an undirected graph corresponding to $\mathcal{G}$, whose vertices are $\mathcal{G}_1$, and there is an edge between two vertices if they appear as a pair in $\Lambda = \{(x_i,x_0)\mid 1\leqslant i \leqslant r \}$ suggested by \eqref{y-comultiplication}. These essential parts are drawn in bullets and bold lines.  
We use the notation $\UM(r,2s)$ to denote $\UM(B)$ for $B\in M_r(\mathbb{k})$ with rank$(B)=2s$. As is shown in Figure \ref{figoc}, $r$ is the number of ribs and $s$ is the number of gray pieces of canopy. The relations \eqref{relationxx} and \eqref{relationyy} in $\UM(B)$ are commutators except for
\begin{align}
    [x_{2i-1},x_{2i}]&-x_0,& 1\leqslant i \leqslant s,\tag{3.6'}\\
    [y_{2i-1},y_{2i}]&-\frac{1}{3} x_0^3,& 1\leqslant i \leqslant s. \tag{3.7'}
\end{align}
Thus on the whole canopy, a non-trivial commutator always appears between elements around a gray piece, which is marked by gray solid line. The generators in $\mathfrak{so}(A)$ could be viewed as the umbrella shaft as in Figure \ref{figUm}, which may have non-trivial commutators with both $\{x_i\}$ and $\{y_j\}$. 
\begin{figure}[htbp]
    \centering
    \begin{minipage}{0.48\textwidth}
        \centering
        \includegraphics[width=\linewidth]{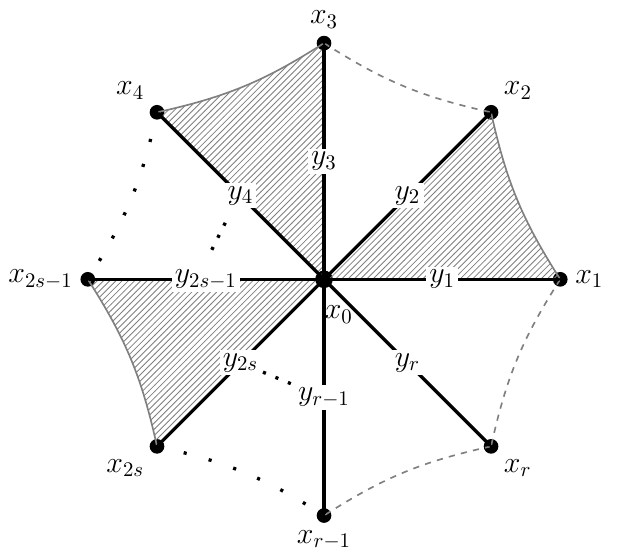} \caption{Top view}\label{figoc}
    \end{minipage}%
    \hfill 
    \begin{minipage}{0.48\textwidth}
        \centering
        \includegraphics[width=\linewidth]{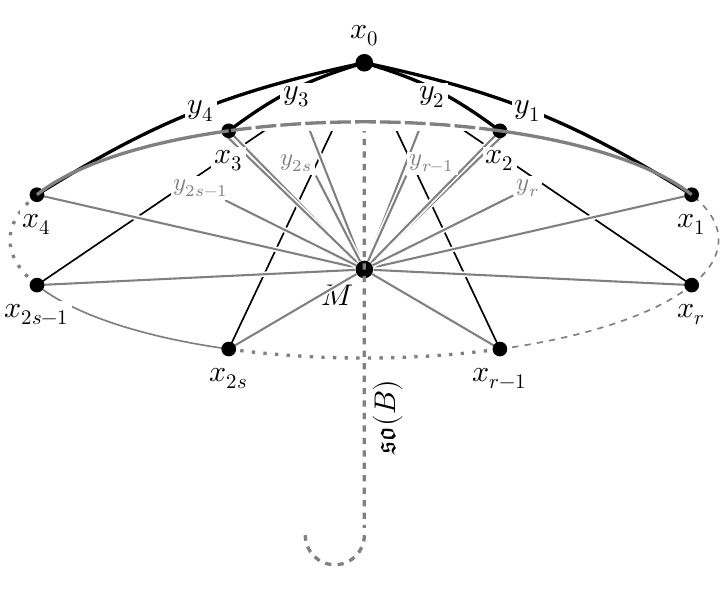} \caption{Bottom view}
        \label{figUm}
    \end{minipage}
\end{figure}

\section{Umbrella Hopf algebras are not IHOEs of primitive parts}
Some basic observations of the Lie algebra $\mathfrak{so}(B)$ help us to illustrate the example. Recall that the symplectic Lie algebra $\mathfrak{sp}_{2s}(\mathbb{k})$ is 
    $$\mathfrak{sp}_{2s}(\mathbb{k}):=\{N\in M_{2s}(\mathbb{k})\mid \Omega N = - N^T \Omega\} = \{N\in M_{2s}(\mathbb{k})\mid \Omega N \Omega= N^T \},$$ 
    where $\Omega=\left(\begin{smallmatrix}
        0 & I_s\\ -I_s & 0
    \end{smallmatrix}\right).$
The symplectic Lie algebra $\mathfrak{sp}_{2s}(\mathbb{k})$ is a simple Lie algebra.
\begin{lemma}\label{lemmalie}
     Let $B\in M_{r}(\mathbb{k})$ be the matrix \eqref{matrixDia} of rank $2s$. If $r=2s$, then $\mathfrak{so}(B)\cong \mathfrak{sp}_{2s}(\mathbb{k})$ as a Lie algebra. Generally, $\mathfrak{sp}_{2s}(\mathbb{k})$ is a Lie subalgebra of $\mathfrak{so}(B)$, and $\mathfrak{so}(B)$ is of $\mathbb{k}$-linear dimension $(r-2s)\times r + 2s^2 + s$. As a consequence, $$\GKdim( \UM(B))=|\mathcal{G}|=(r-2s)\times r + 2s^2 + s +  2r + 1.$$
\end{lemma}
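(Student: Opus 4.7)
The plan is to analyze $\mathfrak{so}(B)$ through a block decomposition that separates the non-degenerate part of $B$ from its radical, identify the non-degenerate piece with $\mathfrak{sp}_{2s}(\mathbb{k})$, and then read off $\GKdim(\UM(B))$ from the corollary in Example \ref{example}.

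First I would write $B = \begin{pmatrix} B' & 0 \\ 0 & 0 \end{pmatrix}$, where $B' \in M_{2s}(\mathbb{k})$ is the non-degenerate antisymmetric $2s \times 2s$ upper-left block of \eqref{matrixDia} and the bottom-right zero block is $(r-2s) \times (r-2s)$. Decomposing $M \in M_r(\mathbb{k})$ conformally as $M = \begin{pmatrix} M_{11} & M_{12} \\ M_{21} & M_{22} \end{pmatrix}$ and expanding $MB + BM^T$ blockwise, the defining relation $MB = -BM^T$ collapses to two independent conditions: $M_{11} B' = -B' M_{11}^T$, i.e.\ $M_{11} \in \mathfrak{so}(B')$, and $B' M_{21}^T = 0$, which by invertibility of $B'$ forces $M_{21} = 0$. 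The blocks $M_{12}$ and $M_{22}$ are unconstrained.

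Next I would identify $\mathfrak{so}(B')$ with $\mathfrak{sp}_{2s}(\mathbb{k})$. Since $B'$ is congruent to the standard symplectic matrix $\Omega = \left(\begin{smallmatrix} 0 & I_s \\ -I_s & 0 \end{smallmatrix}\right)$ via a permutation $P$ (reordering the symplectic pairs), the assignment $N \mapsto PNP^{-1}$ gives a Lie algebra isomorphism $\mathfrak{so}(B') \cong \mathfrak{so}(\Omega)$, so it suffices to check $\mathfrak{so}(\Omega) = \mathfrak{sp}_{2s}(\mathbb{k})$. Using $\Omega^2 = -I_{2s}$, the relation $M\Omega = -\Omega M^T$ is equivalent to $\Omega M = -M^T \Omega$, which is exactly the paper's definition of $\mathfrak{sp}_{2s}(\mathbb{k})$. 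This gives the embedding $\mathfrak{sp}_{2s}(\mathbb{k}) \hookrightarrow \mathfrak{so}(B)$ (as matrices whose only nonzero block is $M_{11}$), with equality when $r = 2s$.

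Finally, a dimension count closes the argument: $\dim_{\mathbb{k}} \mathfrak{sp}_{2s}(\mathbb{k}) = 2s^2 + s$, while $M_{12}$ contributes $2s(r-2s)$ and $M_{22}$ contributes $(r-2s)^2$, giving $\dim_{\mathbb{k}} \mathfrak{so}(B) = 2s^2 + s + 2s(r-2s) + (r-2s)^2 = (r-2s)r + 2s^2 + s$. Substituting into the formula $\GKdim(\UM(B)) = 2r+1+\dim_{\mathbb{k}}\mathfrak{so}(B)$ from the corollary of Example \ref{example} yields the claimed value. No genuine obstacle is expected; the only real subtlety is matching the paper's convention $MA = -AM^T$ with the standard definition of $\mathfrak{sp}_{2s}$, which is handled immediately by the identity $\Omega^2 = -I$.
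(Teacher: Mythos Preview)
Your proposal is correct and follows essentially the same route as the paper: block-decompose $M$ with respect to the splitting of $B$ into its nondegenerate part $B'$ and the zero block, observe that $M_{21}=0$ and $M_{11}\in\mathfrak{so}(B')$ while $M_{12},M_{22}$ are arbitrary, identify $\mathfrak{so}(B')\cong\mathfrak{sp}_{2s}(\mathbb{k})$ via congruence of $B'$ with $\Omega$, and then count dimensions. You supply a bit more detail than the paper (the explicit blockwise expansion, the $\Omega^2=-I$ check, and the arithmetic $2s(r-2s)+(r-2s)^2=(r-2s)r$), but there is no substantive difference in approach.
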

\begin{proof}
    The conclusion is basically derived from the definition and direct calculations. Denote the submatrix of $B$ of full rank $2s$ by $B'$. Then any element $M\in \mathfrak{so}(B)$ is a block matrix $\left(\begin{smallmatrix}
        M^{11} & M^{12}\\ 0 & M^{22}
    \end{smallmatrix}\right)$, where $M^{11}\in \mathfrak{so}(B')$ and $M^{12},M^{22}$ are arbitrary.  
    Note that $B'$ is congruent to $\Omega$, that is,  there is an orthogonal matrix $P$ such that $PB'P^T=\Omega$. Hence
     $\mathfrak{so}(B')$ is isomorphic to the symplectic Lie algebra $\mathfrak{sp}_{2s}(\mathbb{k})$, where the isomorphism is given by $\mathfrak{so}(B') \to  \mathfrak{sp}_{2s}(\mathbb{k}), M \mapsto PMP^T.$ The conclusion follows from $\dim_k  \mathfrak{sp}_{2s}(\mathbb{k}) = 2s^2 +s$ \cite[Example 1.3.5, Table 1]{LiegpIII}.
\end{proof}

To prove the main theorem, we first prove a lemma. 

\begin{lemma}\label{Hopf-alg-gen-by-coradical}
    Suppose $H$ is a connected Hopf algebra of finite GK-dimension such that
     the graded polynomial algebra $\gr H$ is generated by elements of degree $\leqslant m$  as an algebra.
    Then any Hopf subalgebra $K$ of $H$ is generated by $F_m K$ as an algebra.
\end{lemma}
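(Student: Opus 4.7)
The plan is to pass to the associated graded algebra and reduce the claim to a graded statement about polynomial Hopf algebras. First I would note that, by a standard recursive argument on the coradical filtration using $F_0 H = F_0 K = \mathbb{k}$, one obtains $F_m K = K \cap F_m H$ for every $m \geqslant 0$. Consequently $\gr K$ embeds as a graded Hopf subalgebra of $\gr H$, and by Theorem \ref{thmzhuang} applied to the connected Hopf subalgebra $K$ (which inherits finite GK-dimension from $H$), $\gr K$ is itself a weighted polynomial algebra. Once $\gr K$ is shown to be generated in degrees $\leqslant m$ as an algebra, the conclusion for $K$ follows by induction on the coradical order: any $k \in F_n K$ with $n > m$ has $\overline{k}$ expressible as a polynomial in homogeneous elements of $\gr K$ of degrees $\leqslant m$, and lifting each such factor to $F_m K$ and subtracting from $k$ lands in $F_{n-1} K$, where induction closes the argument.

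The graded claim will be reduced to the injectivity of the natural map of indecomposables
\[
Q(\gr K) \longrightarrow Q(\gr H), \qquad Q(A) := A^{+}/(A^{+})^{2}.
\]
Indeed, the hypothesis that $\gr H$ is generated in degrees $\leqslant m$ forces every homogeneous element of $(\gr H)^{+}$ of degree $>m$ to lie in $((\gr H)^{+})^{2}$ (no polynomial generator has higher degree), so $(\gr K)_n \subseteq (\gr K)^{+} \cap ((\gr H)^{+})^{2}$ for all $n > m$. Injectivity of the map above is exactly the inclusion $(\gr K)^{+} \cap ((\gr H)^{+})^{2} \subseteq ((\gr K)^{+})^{2}$, which then yields $(\gr K)_n \subseteq ((\gr K)^{+})^{2}$ for $n > m$, as desired.

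The main step is to establish this injectivity, and my plan is to argue by duality. Since both $\gr H$ and $\gr K$ are locally finite positively graded (finitely generated polynomial) algebras, their graded duals $(\gr H)^{\vee}$ and $(\gr K)^{\vee}$ are connected graded cocommutative Hopf algebras over $\mathbb{k}$, and the inclusion $\gr K \hookrightarrow \gr H$ dualizes to a Hopf surjection $(\gr H)^{\vee} \twoheadrightarrow (\gr K)^{\vee}$. By the Milnor-Moore-Cartier-Kostant theorem in characteristic zero, both duals are universal enveloping algebras of their primitive graded Lie algebras. A Hopf surjection $U(\mathfrak{g}) \twoheadrightarrow U(\mathfrak{h})$ necessarily restricts to a Lie surjection $\mathfrak{g} \twoheadrightarrow \mathfrak{h}$ on primitives (otherwise its image would be generated as an algebra by a proper Lie subalgebra of $\mathfrak{h}$ and could not exhaust $U(\mathfrak{h})$). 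Dualizing once more and invoking the standard identification $P(A^{\vee}) \cong Q(A)^{\vee}$ for a connected graded locally finite Hopf algebra $A$—valid because a primitive of $A^{\vee}$ is exactly a linear form on $A$ that vanishes on $1$ and on $(A^{+})^{2}$—gives the required injection $Q(\gr K) \hookrightarrow Q(\gr H)$.

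The main obstacle is this last injectivity step. The polynomial algebra inclusion $\gr K \subseteq \gr H$ alone is insufficient (as the non-Hopf inclusion $\mathbb{k}[x^{2}] \subset \mathbb{k}[x]$ already shows), so the Hopf subalgebra hypothesis must be used essentially, and the duality route through Milnor-Moore-Cartier-Kostant seems to me the cleanest way to isolate its role.
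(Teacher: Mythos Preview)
Your argument is correct and complete. Both your proof and the paper's proceed by first observing that $\gr K$ is a graded Hopf subalgebra of the polynomial Hopf algebra $\gr H$ and then showing that the polynomial generators of $\gr K$ must sit in degrees $\leqslant m$; the lift back to $K$ is the same in both. The divergence is in the key graded step. The paper simply invokes \cite[Corollary 3.2]{zhouRevisited}, which asserts that $\gr H$ is a graded \emph{polynomial extension} of $\gr K$; this immediately forces any minimal set of polynomial generators of $\gr K$ to extend to one of $\gr H$ and hence to live in degrees $\leqslant m$. You instead prove the equivalent statement that $Q(\gr K)\to Q(\gr H)$ is injective, and you do so by a pleasant duality argument: passing to graded duals, Milnor--Moore identifies both sides as enveloping algebras, the dual surjection becomes a Lie surjection on primitives, and dualizing the identification $P(A^{\vee})\cong Q(A)^{\vee}$ yields the injectivity. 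Your route is longer but self-contained and makes transparent exactly where the Hopf hypothesis enters (through the coalgebra structure of the duals), whereas the paper's proof is a one-line citation that hides this mechanism inside the referenced result.
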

\begin{proof} 
    Since $F_l K=F_l H\cap K$ for any $l\geqslant 0$, $\gr K$ is a Hopf subalgebra of $\gr H$.
    Recall from Theorem \ref{thmzhuang} that both $\gr H$ and $\gr K$ are polynomial algebras. Then by \cite[Corollary 3.2]{zhouRevisited}, $\gr H$ is a graded polynomial extension of $\gr K$. Obviously, the commutative polynomial generators of $\gr K$ have degree no more than $m$. By the proof of Lemma \ref{lemmaPBWexist}, there exists a PBW generating set $\mathcal{G}'$ of $K$ whose principle symbols lie in $\oplus_{i=0}^m(\gr K)_i$. In other words, $\mathcal{G}'\subseteq F_m K$.
\end{proof}
 
We are now ready to prove that the Umbrella Hopf algebra  $\UM(r,2s)$  is not an IHOE of the universal enveloping algebra of its primitive part. Thus $\UM(r,2s)$ is a counter-example to Question \ref{conj1}.

\begin{theorem}\label{theoremCounterex}
Let $H$ be the Hopf algebra $\UM(r,2s)$.
\begin{enumerate}
    \item $H$ has no Hopf subalgebra of codimension $1$ that contains the universal enveloping algebra of its primitive part. 
    \item If $r= 2s$ and $s\ge 2$, then $H$ has no Hopf subalgebra of codimension $1$.
\end{enumerate}
\end{theorem}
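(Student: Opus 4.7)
My plan is to rule out a hypothetical codimension-$1$ Hopf subalgebra $K \subset H = \UM(r,2s)$ via an analysis of the associated graded algebra combined with the $\mathfrak{so}(B)$-action. By Theorem \ref{thmzhuang}, both $\gr K$ and $\gr H$ are weighted polynomial algebras, and by \cite[Corollary~3.2]{zhouRevisited}, $\gr H$ is a graded polynomial extension of $\gr K$; the codimension-$1$ hypothesis means exactly one extra polynomial generator, which (since $\gr H$ is generated in degrees $\leq 2$) must sit in degree $1$ or $2$. By Lemma \ref{Hopf-alg-gen-by-coradical}, $K$ is generated as an algebra by $F_2 K$.

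For part (1), the inclusion $U(P(H)) \subseteq K$ forces $(\gr K)_1 = (\gr H)_1$, so the missing generator lies in degree $2$. This produces a codimension-$1$ subspace $W \subseteq \mathbb{k}^r$ consisting of those coefficient vectors $(c_1,\ldots,c_r)$ for which some representative $\sum c_i y_i + p$, with $p\in F_2 U(P(H))$, lies in $K$. Since $\mathfrak{so}(B) \subseteq K$, $W$ is stable under bracket with $\mathfrak{so}(B)$; the key computation $[M,\sum c_i y_i] \equiv -\sum_k (M^Tc)_k y_k \pmod{F_2 U(P(H))}$ shows $W$ is an $\mathfrak{so}(B)$-submodule of $\mathbb{k}^r$ under the action $c \mapsto -M^Tc$. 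Solving $MB=-BM^T$ in block form, every $M\in\mathfrak{so}(B)$ is block upper-triangular with upper-left block in $\mathfrak{so}(B')$ (where $B'$ is the nondegenerate top-left block of $B$) and the other blocks unconstrained; a direct case analysis then shows the only $\mathfrak{so}(B)$-invariant subspaces of $\mathbb{k}^r$ are $\{0\}$, $V':=\{c:c_1=\cdots=c_{2s}=0\}$, and $\mathbb{k}^r$, of dimensions $0$, $r-2s$, and $r$. None has codimension $1$, yielding the contradiction.

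For part (2), $r=2s$ gives $V'=0$, and the degree-$2$ case still reduces to part (1). In the remaining case the missing generator lies in degree $1$, so $P(K)$ is a codimension-$1$ Lie subalgebra of $P(H) = \mathbb{k}\langle x_0,\ldots,x_{2s}\rangle \oplus \mathfrak{so}(B)$, with $\mathfrak{so}(B) \cong \mathfrak{sp}_{2s}$. Projecting via $\pi:P(H)\twoheadrightarrow\mathfrak{so}(B)$: if $\pi(P(K))=\mathfrak{so}(B)$, then $P(K)\cap\ker\pi$ is a $\mathfrak{so}(B)$-invariant hyperplane in $\mathbb{k}\langle x_0,\ldots,x_{2s}\rangle$, and by irreducibility of the defining representation the only candidate is $\mathbb{k}\langle x_1,\ldots,x_{2s}\rangle$; but then the brackets $[x_i,x_j]=B_{ij}x_0$ with $B$ nondegenerate force $x_0\in P(K)$, a contradiction. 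Otherwise $\pi(P(K))$ is a codimension-$1$ Lie subalgebra of $\mathfrak{sp}_{2s}$. The main obstacle is to rule this out for $s\geq 2$: I would invoke the classical fact that a simple complex Lie algebra of rank $\geq 2$ admits no codimension-$1$ Lie subalgebra, either via the observation that such a subalgebra gives a $1$-dimensional homogeneous space for the corresponding simple algebraic group -- forcing a nontrivial rank-$1$ quotient, excluded by simplicity and dimension -- or via Dynkin's classification of maximal subalgebras.
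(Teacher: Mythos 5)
Your proposal is correct in substance and follows the same skeleton as the paper (reduce to $\gr K\subseteq\gr H$ via Theorem \ref{thmzhuang}, Lemma \ref{Hopf-alg-gen-by-coradical} and the graded polynomial extension, then split according to whether the missing generator sits in degree $1$ or $2$, and finish with simplicity of $\mathfrak{sp}_{2s}$), but the two key sub-arguments are carried out differently. For part (1), the paper chases explicit matrix units $e_{12},e_{21},e_{1(2i-1)}-e_{(2i)2},\dots$ to show that a single $y_i'$ with a nonzero coefficient among the first $2s$ slots generates all of $\{y_1,\dots,y_r\}$ under bracketing with $\mathfrak{so}(B)$, plus a separate dimension count ($r'\leqslant r-2s<r-1$) to produce such an element; your classification of all $\mathfrak{so}(B)$-invariant subspaces of $\mathbb{k}^r$ as $0$, $V'$, $\mathbb{k}^r$ packages both steps into one statement and is arguably cleaner, at the cost of having to verify the invariant-subspace claim (which does follow from irreducibility of the $\mathfrak{sp}_{2s}$-action on $\mathbb{k}^{2s}$ and of $\mathfrak{gl}_{r-2s}$ on $\mathbb{k}^{r-2s}$, exactly as the explicit brackets in the paper show). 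For part (2), the paper avoids your case split on $\pi(P(K))$ entirely: it observes that all $r$ degree-$2$ generators of $\gr K$ survive, so their coproducts put $\overline{x_0}\otimes\overline{x_j}-\overline{x_j}\otimes\overline{x_0}$ into $(\gr K)_1\wedge(\gr K)_1$, forcing $\mathfrak{L}\subseteq P(K)$ directly by a coalgebra argument; your case A replaces this with pure Lie theory inside $P(H)$. Both then cite the same classical fact (the paper uses Varea's result that a simple Lie algebra with a codimension-$1$ subalgebra is $\mathfrak{sl}_2$).

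One imprecision in your case A: since $\mathfrak{so}(B)$ need not be contained in $P(K)$ there, bracketing $P(K)\cap\mathfrak{L}$ with a lift $M+\ell_M\in P(K)$ of $M\in\mathfrak{so}(B)$ only shows $[M,P(K)\cap\mathfrak{L}]\subseteq (P(K)\cap\mathfrak{L})+\mathbb{k}x_0$, because $[\ell_M,\mathfrak{L}]\subseteq\mathbb{k}x_0$. So $P(K)\cap\mathfrak{L}$ is only invariant modulo the center, and its image in $\mathfrak{L}/\mathbb{k}x_0$ must be all of $\mathbb{k}^{2s}$ by irreducibility; hence $P(K)\cap\mathfrak{L}$ is some complement to $\mathbb{k}x_0$, not necessarily $\mathbb{k}\langle x_1,\dots,x_{2s}\rangle$. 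Your final step survives this correction: nondegeneracy of $B$ still yields $u,v\in P(K)\cap\mathfrak{L}$ with $[u,v]$ a nonzero multiple of $x_0$, contradicting $(P(K)\cap\mathfrak{L})\cap\mathbb{k}x_0=0$. With that one-line repair the argument is complete.
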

\begin{proof}
   We follow the notations in Example \ref{example} and consider the PBW generating set $\mathcal{G}=\{x_{i}\}_{i=0}^r\cup \{y_i\}_{i=1}^r\cup \{X_i\}_{i=1}^d$ of $H$, where $d=(r-2s)\times r + 2s^2 +s$ and $\{X_i\}_{i=1}^d$ is a linear basis of $\mathfrak{so}(B)$ with $B$ being the matrix \eqref{matrixDia}. 
   
   Suppose $K\subseteq H$ is an Hopf subalgebra of codimension $1$ and $K$ contains the primitive part $P(H)$ of $H$.  
   Since $K$ is also a connected Hopf algebra of finite GK-dimension, it follows from Theorem \ref{thmzhuang} and  Proposition \ref{Hopf-alg-gen-by-coradical} that $\gr K\subseteq \gr H$ is a graded polynomial algebra generated by $(\gr K)_1\oplus (\gr K)_2$. As $P(H)= P(K)$, $(\gr K)_1=(\gr H)_1$. Thus, we may write 
   $$\gr K = \mathbb{k}[\overline{x_0},\cdots,\overline{x_r},\overline{X_1},\cdots,\overline{X_d},\overline{y_1'}, \cdots, \overline{y_{r'}'}] \subseteq \mathbb{k}[\overline{x_0},\cdots,\overline{x_r},\overline{X_1},\cdots,\overline{X_d},\overline{y_1}, \cdots, \overline{y_{r}}]= \gr H,$$ where $\{\overline{y_1'}, \cdots, \overline{y_{r'}'}\}$ are generators of $\gr K$ of degree $2$. In fact, we may assume that $$\overline{y_i'}=\sum_{j=1}^r \alpha_{ij} \overline{y_j}, (1 \leqslant i \leqslant r').$$
   Let $y_i' = \sum_{j=1}^r \alpha_{ij} y_j$, it follows from  Lemma \ref{lemmaPBWexist} that the set $\mathcal{G}'=\{x_{i}\}_{i=0}^r\cup \{y_i'\}_{i=1}^{r'}\cup \{X_i\}_{i=1}^d$ satisfying \eqref{eqPolyG} is a PBW generating set of $K$.
   As a consequence, $|\mathcal{G'}|=\GKdim K = \GKdim H -1=|\mathcal{G}|-1$. Therefore $r'=r-1$. Note that $2s\geqslant2$ thence $K$ contains some element $y_i'$ such that $\{\alpha_{i1}, \alpha_{i2}, \cdots, \alpha_{i(2s)}\}$ 
   are not all $0$. Otherwise, $\{y_1', \cdots, y_{r'}'\}$ is contained in the space spanned by  $\{y_{2s+1}, \cdots, y_r\}$ and $r' \leqslant r-2s$.
   Without loss of generality, we may assume $\alpha_{i1}\ne 0$. It follows from $e_{12}\in \mathfrak{so}(B)$ that
   \[[y'_i,e_{12}]= \alpha_{i1} y_2\in K, \textrm{ and so, } y_2 \in K.\]
  Since $e_{21}, e_{1(2i-1)}-e_{(2i)2}, e_{(2i-1)(2i)} \in \mathfrak{so}(B)$ for all $1\leqslant i \leqslant s$, it follows that
    \begin{align*}
       & [y_2,e_{21}] = y_1 \in K,\\
       &[y_1,e_{1(2i-1)}-e_{(2i)2}] = y_{2i-1}\in K, 1\leqslant i \leqslant s,\\
       &[y_{2i-1},e_{(2i-1)(2i)}] = y_{2i}\in K, 1\leqslant i \leqslant s.
    \end{align*}
    Therefore, $\{y_1, \cdots, y_{2s}\} \subseteq K.$ Similarly, since $e_{1(2s+i)}\in \mathfrak{so}(B)$ for any $1\leqslant i\leqslant r-2s$, 
    \[[y_1,e_{1(2s+i)}]=y_{2s+i}\in K, 1\leqslant i\leqslant r-2s.\]

Hence the generating set $\mathcal{G}\subseteq K$, which implies $H \subseteq K$, contradicting to the assumption.
    Hence, the proof of the statement (1) is completed. 
    
    Next, we prove the statement (2). Suppose $r=2s$ and $K$ is a Hopf subalgebra of $H$ of codimension $1$. We may now assume that $P(H) \nsubseteq K$. 
    
    We claim first that $P(K)$ is a Lie subalgebra of $P(H)$ of  codimension $1$.
    
    For convenience let $\GKdim H =n, \dim_{\mathbb{k}} P(H)=p$ and $\dim_{\mathbb{k}} P(K)=t$.
    Then $\GKdim K=n-1, p = d+r+1, n=p+r$.
    The associated graded Hopf algebra $\gr H$ is a graded regular local ring, whose unique graded maximal ideal $\mathfrak{m}_H=\oplus_{i=1}^{\infty}(\gr H)_i$. It is well known that the rank $n$ of the minimal generating set of $\gr H$ coincides with $\dim_{\mathbb{k}}\mathfrak{m}_H/\mathfrak{m}_H^2$. The graded quotient space is actually spanned by $\{\overline{x_{i}} + \mathfrak{m}_H^2 \}_{i=0}^r\cup \{\overline{y_i} + \mathfrak{m}_H^2 \}_{i=1}^r\cup \{\overline{X_i} + \mathfrak{m}_H^2\}_{i=1}^d$, which is a direct sum of a $p$-dimensional space in degree $1$ and a $r$-dimensional one in degree $2$.  Similarly we define $\mathfrak{m}_K$ and $\mathfrak{m}_K/\mathfrak{m}_K^2$ is a codimension-1 subspace of $ \mathfrak{m}_H/\mathfrak{m}_H^2$. Since $P(K) \subsetneq P(H)$, $\mathfrak{m}_K\cap (\gr K)_1 \subsetneq \mathfrak{m}_H \cap (\gr H)_1$. Then the graded space $\mathfrak{m}_K/\mathfrak{m}_K^2$ could only be the sum of a $(p-1)$-dimensional space in degree $1$ and a $r$-dimensional one in degree $2$. The claim is then proved.

   On the other hand, $\{\overline{y_i} + f_i(X)\}_{i=1}^r \subseteq K $, where $f_i(X)\in \mathfrak{m}_H^2\cap (\gr H)_2 = (\gr H)_1^2$. Then $\overline{x_0}\otimes \overline{x_j} - \overline{x_j} \otimes \overline{x_0} \in (\gr K)_1 \wedge (\gr K)_1$ for any $1\leqslant j\leqslant r$. Hence, $x_i \in K$ for all $i$.
    Then $\mathfrak{L}:=\oplus_{i=0}^{2s}\mathbb{k}x_i$ is both a Lie ideal of  $P(H)$ and $P(K)$. It follows that $P(K)/\mathfrak{L}$ is a Lie subalgebra of $P(H)/\mathfrak{L} \cong \mathfrak{so}(B) = \mathfrak{sp}_{2s}(\mathbb{k})$ of codimension $1$. Since the symplectic Lie algebra $\mathfrak{sp}_{2s}(\mathbb{k})$ is a simple Lie algebra, it follows  \cite[Corollary 2.3]{varea1988existence} that $s=1$.
\end{proof}

\begin{corollary}\label{CoroMain}
Let $H$ be the Hopf algebra $\UM(r,2s)$.
    \begin{enumerate}
        \item $H$ is not an IHOE of the universal enveloping algebra of its primitive part.
        \item If $r = 2s$ and $s\geqslant 2$, then $H$ is not a HOE of any Hopf subalgebra.
        \item If $s\geqslant 2$, then $H$ is not an IHOE of $\mathbb{k}$.
    \end{enumerate}
\end{corollary}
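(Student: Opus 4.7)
My plan is to deduce each of the three parts of Corollary \ref{CoroMain} from Theorem \ref{theoremCounterex}, using the elementary fact that if $H = K[t;\sigma,\delta]$ is an HOE of a Hopf subalgebra $K$, then $\GKdim K = \GKdim H - 1$, so $K$ has codimension one in $H$. For (1), any IHOE chain $U(P(H)) = H_{(0)}\subsetneq\cdots\subsetneq H_{(n)} = H$ has length $n\geq 1$ because the $y_i$ are not primitive and so $H\neq U(P(H))$; then $H_{(n-1)}$ is a codimension-one Hopf subalgebra containing $U(P(H))$, contradicting Theorem \ref{theoremCounterex}(1). For (2), under $r = 2s$ and $s\geq 2$, Theorem \ref{theoremCounterex}(2) rules out every codimension-one Hopf subalgebra, so $H$ is not an HOE of any Hopf subalgebra.

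For (3), I use the following structural principle about IHOE chains of connected Hopf algebras: at each Hopf-Ore step $H_{(i)} = H_{(i-1)}[t;\sigma,\delta]$, either $t$ is primitive and $\dim P(H_{(i)}) = \dim P(H_{(i-1)}) + 1$, or $t$ is non-primitive and $P(H_{(i)}) = P(H_{(i-1)})$ (cf.\ \cite[Theorem 1.3]{brown2015connected}). Hence if $H$ is an IHOE of $\mathbb{k}$, the distinct primitive subspaces along the chain assemble into a complete flag $0 \subsetneq \mathfrak{p}_1 \subsetneq \cdots \subsetneq \mathfrak{p}_{\dim P(H)} = P(H)$ of Lie subalgebras, each of codimension one in the next. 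I would then show that $P(\UM(r,2s))$ admits no such flag when $s\geq 2$. The span $\bigoplus_{i=0}^r \mathbb{k} x_i$ is a Lie ideal of $P(H)$ whose quotient is isomorphic to the Lie algebra $\mathfrak{so}(B)$, so any complete flag of $P(H)$ projects (after removing repetitions) to a complete flag of $\mathfrak{so}(B)$. Intersecting that flag with the Lie subalgebra $\mathfrak{sp}_{2s}(\mathbb{k})\hookrightarrow \mathfrak{so}(B)$ from Lemma \ref{lemmalie} yields a chain of Lie subalgebras of $\mathfrak{sp}_{2s}(\mathbb{k})$ whose dimensions run from $0$ to $2s^2 + s$ in steps of $0$ or $1$; hence every intermediate integer is attained, producing a Lie subalgebra of $\mathfrak{sp}_{2s}(\mathbb{k})$ of codimension one, which is impossible by \cite[Corollary 2.3]{varea1988existence} for the simple Lie algebra $\mathfrak{sp}_{2s}(\mathbb{k})$ when $s\geq 2$.

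The main technical point is justifying the primitive-part dichotomy at each HOE step of an IHOE chain, which rests on the classification of Hopf-Ore elements in connected Hopf algebras. Once this is in hand, (1) and (2) reduce directly to Theorem \ref{theoremCounterex}, and (3) comes down to the Lie-theoretic impossibility of finding codimension-one subalgebras inside the simple symplectic Lie algebra $\mathfrak{sp}_{2s}(\mathbb{k})$ when $s\geq 2$.
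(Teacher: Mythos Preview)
Your proposal is correct and follows essentially the same route as the paper: parts (1) and (2) are deduced from Theorem~\ref{theoremCounterex} via the codimension-one property of an HOE step, and part (3) is obtained by producing a complete flag of Lie subalgebras, passing to $\mathfrak{so}(B)$, and then intersecting with the simple factor $\mathfrak{sp}_{2s}(\mathbb{k})\cong\mathfrak{so}(B')$ to reach a contradiction through \cite[Corollary 2.3]{varea1988existence}. Your treatment of (3) is in fact a little more explicit than the paper's terse ``by induction'': you spell out the primitive-part dichotomy $\dim P(H_{(i)})-\dim P(H_{(i-1)})\in\{0,1\}$ at each HOE step (which indeed follows from the graded-polynomial description of $\gr H_{(i)}$ over $\gr H_{(i-1)}$), and your projection modulo the ideal $\mathfrak{L}=\bigoplus_{i=0}^r\mathbb{k}x_i$ cleanly produces the flag in $\mathfrak{so}(B)$ without needing the paper's auxiliary observation that $\{x_i\}\subseteq P(K)$ for every codimension-one Hopf subalgebra $K$.
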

\begin{proof}
    Statements (1)-(2) follows directly from the fact that any HOE $K\subseteq H$ is of codimension $1$ \cite[Corollary 2.7]{brown2015connected}. For statement (3), notice that Theorem \ref{theoremCounterex}(2) asks $r=2s$ only for the fact that $\mathfrak{so}(B)=\mathfrak{sp}_{2s}(\mathbb{k})$ is a semisimple Lie algebra. Generally if $r\geqslant 2s$ and $s\geqslant 2$, then a Hopf subalgebra $K$ of codimension 1 still satisfies that $P(K)\subseteq P(H)$ contains $\{x_i\}_{i=0}^r$ is of codimension 1. By induction, if $H$ is an IHOE, then $\mathfrak{so}(B)$ has a complete flag of Lie subalgebras
    \[0 \subsetneq \mathfrak{s}_1 \subsetneq \mathfrak{s}_2 \subsetneq \cdots \subsetneq \mathfrak{s}_{(r-2s) \times r + 2s^2 + s} = \mathfrak{so}(B),\dim_{\mathbb{k}} \mathfrak{s}_i =i .\] Let $B'$ be the submatrix of $B$ of full rank $2s$, then as a space,
    \[\mathfrak{so}(B) = \mathfrak{so}(B') \oplus \mathfrak{g}, \ \mathfrak{g}=\bigoplus_{\substack{1\leqslant i \leqslant r\\ 2s+1 \leqslant j \leqslant r}} \mathbb{k} e_{ij}.\]
    The chain $(\mathfrak{s}_1 \cap \mathfrak{so}(B')) \subseteq \cdots \subseteq (\mathfrak{s}_{(r-2s) \times r + 2s^2 + s} \cap \mathfrak{so}(B'))$ has a sub-chain such that the subset in each step is a proper subset.
   Meanwhile, $\mathfrak{s}_i \cap \mathfrak{so}(B')$ is a Lie subalgebra of $\mathfrak{so}(B)$. Thus $\mathfrak{so}(B')$ has a complete flag, which contradict to the fact that $\mathfrak{so}(B')$ is a simple Lie algebra.
\end{proof}

\begin{remark}
    Among the counterexamples we give to Question \ref{conj1}, the smallest one is $\UM(2,2)$, whose GK-dimension is $8$ by Lemma \ref{lemmalie}. The space of its primitive elements is of linear dimension $6$. Notice that $\mathfrak{sp}_2(\mathbb{k})=\mathfrak{sl}_2(\mathbb{k})$, and $\UM(2,2)$ is an IHOE of $\mathbb{k}$:
    \[\UM(2,2)=\mathbb{k}[x_0][x_1][x_2;\delta_1][y_1;\delta_2][y_2;\delta_3][e_{11}-e_{22};\delta_4][e_{12};\delta_5][e_{21};\delta_6].\]
    The smallest example that is neither an IHOE of primitive part, nor an IHOE of $\mathbb{k}$ is $\UM(4,4)$, which is of GK-dimension 19. The space of the primitive elements is of linear dimension $15$.
\end{remark}

\section{Properties of Umbrella Hopf algebras}
In this section, we describe the structure and give some homological properties of $\UM(r,2s)$.
As given in the construction, the generating set $\mathcal{G}$ of $\UM(r,2s)$ is divided into three parts. In Proposition \ref{PropIterateUL} we will show that $\UM(r,2s)$ is an iterated crossed product of the enveloping algebras of these three parts as a Hopf algebra. 
\subsection{Crossed product of Hopf algebras}
It is well known that the algebra structure of crossed products can be characterized by cleft extensions \cite[Theorem 7.2.2]{montgomery1993hopf}. The corresponding coalgebra and Hopf versions are studied in \cite{AndruExtension}. We first recall the definition of the crossed product as an Hopf algebra following \cite{AndruExtension}.
\begin{definition}
    Let $K$ and $J$ be two Hopf algebras. Suppose there is a $J$-weak action $\gamma$ on $K$ \cite[Definition 2.0]{AndruExtension} (specifically, $K$ is a left $J$-module algebra). A linear map $\sigma:J\otimes J \to K$ is called a cocycle, if
\begin{align*}
    &\sigma(1,a)=\sigma(a,1)=\varepsilon(a)1_K, &\forall a \in J,\\
   & \gamma(a_1,\sigma(b_1,c_1))\sigma(a_2,b_2c_2) = \sigma(a_1,b_1) \sigma(a_2b_2,c), &\forall a,b,c\in J.
\end{align*}
\end{definition}
\begin{definition}  Let $K$ and $J$ be two Hopf algebras.
Suppose there is a $K$-weak coaction $\rho$ on $J$ \cite[Definition 2.10]{AndruExtension} 
$\rho:J\to J\otimes K, a \mapsto \rho(a)^{(1)} \otimes \rho(a)^{(2)}$ (specifically, $J$ is a right $K$-comodule coalgebra). A linear map $\tau:J\to K\otimes K, a \mapsto \tau(a)^{(1)} \otimes \tau(a)^{(2)}$ is called a co-cocycle, if
\begin{align*}
    &\quad \varepsilon_K (\tau(a)^{(1)}) \otimes \tau(a)^{(2)} = \tau(a)^{(1)} \otimes \varepsilon_K(\tau(a)^{(2)})= \varepsilon_J(a) 1_K, & \forall a \in J,\\
    &\quad (\tau(a_1)^{(1)})_1 \tau(\rho(a_2)^{(1)})^{(1)} \otimes (\tau(a_1)^{(1)})_2 \tau(\rho(a_2)^{(1)})^{(2)} \otimes \tau(a_1)^{(2)} \rho(a_2)^{(2)}  &\\
     &=\tau(a_1)^{(1)} \otimes (\tau(a_1)^{(2)})_1 \tau(a_2)^{(1)} \otimes (\tau(a_1)^{(2)})_2 \tau(a_2)^{(2)}, &\forall a \in J.
\end{align*}
\end{definition}

Let $K$ and $J$ be two Hopf algebras with the data $(\gamma,\sigma,\rho,\tau)$ as above. Under certain conditions, $(\gamma,\sigma)$ and $(\rho,\tau)$ give an algebra structure  and  a coalgebra structure on $K\otimes J$ respectively. Furthermore, $K\otimes J$ has a Hopf structure under some compatible conditions as below \cite[Theorem 2.20]{AndruExtension}.
\begin{definition}
    Let $K$ and $J$ be two Hopf algebras. Suppose that there is a $J$-weak action $\gamma$ on $K$ and an $K$-weak coaction $\rho$ on $J$. Let $\sigma:J\otimes J\to K$ be a cocycle and $\tau:J\to K\otimes K$ be a co-cocycle. If the vector space $K\otimes J$, with the following structures, is a Hopf algebra, then it is called the crossed product  of $K$ and $J$ with the data $(\gamma,\sigma,\rho,\tau)$, denoted by $K^{\tau}\#_{\sigma}J$.
    \begin{enumerate}
        \item (unit) $1_{K^{\tau}\#_{\sigma}J}=1_K\# 1_J$.
        \item (product) $(h\# a)(g\# b):= h \gamma(a_1,g)\sigma(a_2,b_1)\# a_3b_2$.
    \item (counit) $\varepsilon_{K^{\tau}\#_{\sigma}J} =\varepsilon_K\#\varepsilon_J $.
    \item (coproduct) $\Delta(h\#a) =( h_1 \tau(a_1)^{(1)} \# \rho(a_2)^{(1)}) \otimes (h_2 \tau(a_1)^{(2)} \rho(a_2)^{(2)} \# a_3 ) $.
    \item (antipode) $S(h\#a) = (\gamma(S(\rho(a)^{(1)})_2,S(\rho(a)^{(2)}))\# S(\rho(a)^{(1)})_1)(S h \#1)$.
    \end{enumerate} 
   
    The cocycle $\sigma$ is often omitted in the notation if $\sigma(a,b)=\varepsilon(a)\varepsilon(b)1_K$ for any $a,b\in J$. Similarly, the co-cocycle $\tau$ is omitted if $\tau(a) = \varepsilon(a)1_K \otimes 1_K$ for any $a\in J$.
\end{definition}

Instead of checking the compatibility conditions, which are tedious, we check the equivalent characterization of cleft extension as a Hopf algebra \cite[Definition 3.2.13]{AndruExtension}. As is summarized in \cite[Proposition 3.1.12]{AndruExtNote}, any cleft extension $H$ of $K$ by $J$ as a Hopf algebra is isomorphic to a crossed product $K^{\tau}\#_{\sigma}J$.
\begin{definition}\label{defCleft}
    A Hopf algebra $H$ is a cleft extension of the Hopf algebra $K$ by the Hopf algebra $J$ if the following conditions hold.
    \begin{enumerate}
        \item There is an exact sequence of Hopf algebras  \cite[Definition 1.2.0]{AndruExtension}
        $$0\to K \xrightarrow{i} H \xrightarrow{\pi} J \to 0.$$
        \item There is a convolution invertible right $J$-comodule map $\chi_H:J\to H$ such that $\chi_H(1_J) = 1_H$.
        \item There is a convolution invertible left $K$-module map $\xi_H:H \to K$ such that $\varepsilon_K \xi_H = \varepsilon_H$.
        \item $\xi_H \chi_H = \varepsilon_J 1_K.$
    \end{enumerate}
\end{definition}

\begin{lemma}\label{LemmaCleft} The conditions (1) and (2) in
   Definition \ref{defCleft} with $\varepsilon_H\chi_H = \chi_J$ are sufficient to give a cleft extension.
\end{lemma}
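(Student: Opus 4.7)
The plan is to construct $\xi_H$ directly from the cleaving section $\chi_H$ via the standard formula $\xi_H(h) := h_1 \chi_H^{-1}(\pi(h_2))$ and verify the outstanding items of Definition \ref{defCleft}. A preliminary ingredient one derives from the convolution identity $\chi_H \ast \chi_H^{-1} = \varepsilon_J 1_H$ together with the right $J$-comodule hypothesis $(\mathrm{id}\otimes \pi)\Delta\chi_H(a) = \chi_H(a_1) \otimes a_2$ is the twisted coaction
$$(\mathrm{id}\otimes\pi)\Delta\chi_H^{-1}(a) = \chi_H^{-1}(a_2) \otimes S_J(a_1).$$

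First I would apply $(\mathrm{id}\otimes\pi)\Delta$ to $\xi_H(h)$, insert both coaction formulas, and use $\pi S_H = S_J \pi$ together with the antipode axiom to collapse the output to $\xi_H(h)\otimes 1_J$; this places $\xi_H(h)$ in $K = H^{\mathrm{co}\,J}$. Left $K$-linearity $\xi_H(kh) = k\,\xi_H(h)$ then reduces to substituting the coinvariance identity $k_1 \otimes \pi(k_2) = k\otimes 1_J$ (valid for $k\in K$) at the appropriate slot of the expansion of $\Delta(kh)$. The normalization $\varepsilon_K \xi_H = \varepsilon_H$ follows from $\varepsilon_H \chi_H^{-1} = \varepsilon_J$, which is a direct convolution consequence of the hypothesis $\varepsilon_H \chi_H = \varepsilon_J$, and condition (4), $\xi_H \chi_H = \varepsilon_J 1_K$, is a one-line check using the comodule property of $\chi_H$.

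The main obstacle is the convolution invertibility demanded by condition (3). My plan is to propose the inverse
$$\xi_H^{-1}(h) := \chi_H(\pi(h_1))\, S_H(h_2)$$
and verify both halves $\xi_H \ast \xi_H^{-1} = \varepsilon_H 1_H = \xi_H^{-1} \ast \xi_H$ by Sweedler manipulation, using the antipode axiom and the telescoping identity $\chi_H\pi \ast \chi_H^{-1}\pi = \varepsilon_H 1_H$ that comes from $\pi$ being a coalgebra map. The delicate point is confirming that $\xi_H^{-1}$ itself takes values in $K$. For this I would expand $(\mathrm{id}\otimes\pi)\Delta(\xi_H^{-1}(h))$ in four-level Sweedler, combine $\rho_H S_H(h) = S_H(h_2) \otimes S_J\pi(h_1)$ with the comodule identity for $\chi_H$, then use $\pi S_H = S_J\pi$ and an antipode collapse at the middle two Sweedler factors to reduce the coaction to $\xi_H^{-1}(h) \otimes 1_J$, placing $\xi_H^{-1}$ in $K$ and completing the verification of the cleft extension data.
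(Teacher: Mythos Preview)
Your argument is correct and is the standard construction of the retraction $\xi_H$ from a cleaving section $\chi_H$; the formulas $\xi_H(h)=h_1\,\chi_H^{-1}(\pi(h_2))$ and $\xi_H^{-1}(h)=\chi_H(\pi(h_1))\,S_H(h_2)$, together with the coinvariance checks you outline, are exactly what is needed. One small remark: in the statement as printed, ``$\varepsilon_H\chi_H=\chi_J$'' is a typo for $\varepsilon_H\chi_H=\varepsilon_J$, which is the hypothesis you are (rightly) using.

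The paper, however, does not give a proof at all: it simply cites \cite[Lemma~3.1.14]{AndruExtNote}. So rather than a different route, you have supplied the argument that the paper outsources. What you wrote is essentially the content of that cited lemma (the Doi--Takeuchi/Andruskiewitsch--Devoto mechanism), so your proposal is consistent with the reference and could stand in for it.
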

\begin{proof} See \cite[Lemma 3.1.14]{AndruExtNote}.
\end{proof}

The Hopf Ore extension of a connected Hopf algebra of invariant type defined in \cite{brown2015connected} is a special case of cleft extension as a Hopf algebra. Note that a Hopf Ore extension $K\subseteq H$ is of invariant type if and only if $K^+H$ is an left and right ideal of $H$. By \cite[Theorem 1.9]{brown2015connected} such a Hopf algebra $H$ could always be realized as $K[z;\partial]$ for some $z\in H$ and $\partial \in \Der(K)$. In the following lemma, we use  $\overline{z}$ to denote $z + K^+H \in \overline{H} = H/K^+H$.
 \begin{lemma}
     Suppose $K$ is a connected Hopf algebra and $H = K[z;\partial]$ is a Hopf Ore extension of invariant type. Then $H$ is also connected and $H\cong K^{\tau} \# \mathbb{k}[\overline{z}]$ is a crossed product as a Hopf algebra.
 \end{lemma}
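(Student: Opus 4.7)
The plan is to apply Lemma \ref{LemmaCleft} to exhibit $H$ as a cleft extension of $K$ by $J := \mathbb{k}[\bar{z}]$, and then invoke the fact (cited just before Definition \ref{defCleft}) that any cleft extension of Hopf algebras is isomorphic to a crossed product. This will give $H \cong K^{\tau}\#_{\sigma}\mathbb{k}[\bar z]$; the cocycle $\sigma$ will drop out because the $J$-action induced on $K$ is the honest derivation $\partial$, so no multiplicative correction is needed.

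First I would set up the short exact sequence of Hopf algebras. Since $H$ is of invariant type, $K^{+}H = HK^{+}$ is a two-sided (hence Hopf) ideal, so $\bar{H} := H/K^{+}H$ carries a quotient Hopf algebra structure and $\pi\colon H \twoheadrightarrow \bar{H}$ is a Hopf map. Using that $H$ is free as a left $K$-module on $\{z^n\}_{n\geq 0}$, we identify $\bar{H} \cong \mathbb{k}[\bar{z}]$ as algebras. By the structure theorem for HOEs over a connected Hopf algebra \cite[Theorem 1.3]{brown2015connected} (combined with the fact that $1$ is the only grouplike of $K$), one may take $\Delta(z) = z\otimes 1 + 1\otimes z + w$ for some $w \in K\otimes K$. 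After replacing $z$ by $z - \varepsilon(z)$ if necessary, assume $\varepsilon(z)=0$; the counit axiom then forces $(\id\otimes \varepsilon)(w)=0$ and $(\varepsilon\otimes \id)(w)=0$. Applying $\pi\otimes \pi$ and using $\pi|_{K} = \varepsilon_K$ yields $\Delta_{\bar H}(\bar z) = \bar z\otimes 1 + 1\otimes \bar z$, so $\bar z$ is primitive and $\bar H = \mathbb{k}[\bar z]$ as Hopf algebras, which is manifestly connected. Connectedness of $H$ itself follows by a coradical filtration argument identical in spirit to the one at the end of the preceding Hopf algebra proof: the filtration $\tilde F_{i+1}H := \Delta^{-1}(H\otimes \tilde F_i H + \mathbb{k}\otimes H)$ with $\tilde F_0 H = \mathbb{k}$ is exhaustive, so $H_0 \subseteq \tilde F_0 H = \mathbb{k}$.

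Next I would construct the cleaving map. Define $\chi_H \colon \mathbb{k}[\bar z]\to H$ by $\chi_H(\bar z^n) := z^n$. It is unital and satisfies $\varepsilon_H\chi_H = \varepsilon_J$ because $\varepsilon(z^n)=0$ for $n\geq 1$. The right $J$-coaction on $H$ is $\rho_H := (\id\otimes \pi)\Delta$, and the computation
\[
\rho_H(z) \;=\; (\id\otimes \pi)(z\otimes 1 + 1\otimes z + w) \;=\; z\otimes 1 + 1\otimes \bar z
\]
uses precisely $(\id\otimes\varepsilon)(w)=0$ together with $\pi|_K = \varepsilon_K$. Since $\rho_H$ is an algebra map (as $\pi$ is Hopf) and $z\otimes 1$, $1\otimes \bar z$ commute in $H\otimes \bar H$,
\[
\rho_H(z^n) \;=\; \sum_{i=0}^{n}\binom{n}{i} z^i \otimes \bar z^{n-i} \;=\; (\chi_H\otimes \id)\Delta_J(\bar z^n),
\]
showing $\chi_H$ is a right $J$-comodule map. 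Convolution invertibility of $\chi_H$ is automatic because $J$ is connected and $\chi_H$ preserves unit and counit, via the standard inductive construction of $\chi_H^{-1}$ along the coradical filtration of $J$ (cf.\ \cite[Lemma 5.2.10]{montgomery1993hopf}).

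Conditions (1) and (2) of Definition \ref{defCleft} and the additional condition $\varepsilon_H\chi_H = \varepsilon_J$ are thus verified, so Lemma \ref{LemmaCleft} produces a cleft extension and hence the desired crossed-product presentation $H \cong K^{\tau}\#\mathbb{k}[\bar z]$ as Hopf algebras. The main technical point is obtaining the precise coproduct form $\Delta(z) = z\otimes 1 + 1\otimes z + w$ with $w\in K\otimes K$ from the invariant-type hypothesis and the connectedness of $K$; everything downstream, including the comodule-map identity for $\chi_H$ and the triviality of $\sigma$, is then a direct computation.
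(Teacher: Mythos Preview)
Your approach is essentially the same as the paper's: build the exact sequence $0\to K\to H\to \mathbb{k}[\bar z]\to 0$, define $\chi_H(\bar z^n)=z^n$, verify it is a unital counital $\bar H$-comodule map with convolution inverse, and invoke Lemma~\ref{LemmaCleft}. The paper writes the inverse explicitly as $\chi'_H(\bar z^i)=(-z)^i$ and checks $\chi_H\star\chi'_H=\varepsilon$ by the binomial identity, whereas you appeal to the general fact for connected $J$; both are fine.

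The one point that needs tightening is your reason for $\sigma$ being trivial. The statement ``the $J$-action induced on $K$ is the honest derivation $\partial$, so no multiplicative correction is needed'' is not the correct justification: the cocycle $\sigma(u,v)=\chi_H(u_1)\chi_H(v_1)\chi_H^{-1}(u_2v_2)$ measures the failure of $\chi_H$ to be multiplicative, not the failure of the action to be associative. What actually kills $\sigma$ here is the trivial observation that $\chi_H$ \emph{is} an algebra map, since $\chi_H(\bar z^m)\chi_H(\bar z^n)=z^mz^n=z^{m+n}=\chi_H(\bar z^{m+n})$; equivalently, the paper's direct computation with $\chi'_H(\bar z^i)=(-z)^i$ collapses via $\sum_i\binom{m}{i}(-1)^{m-i}=0$. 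Replace your heuristic with this one-line remark and the argument is complete.
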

 \begin{proof}
     First of all, we may assume that $z\in H^+$, otherwise replace it with $z-\varepsilon(z)$. Since $H$ is a free $K$-module with basis $\{z^i\}_{i=0}^{\infty}$, $\overline{H}:=H/K^+H = \oplus_{i=0}^{\infty}\mathbb{k}\overline{z}^i$ as a space. As $K^+H$ is an ideal, $\overline{H}$ is a Hopf algebra and $\overline{H} = \mathbb{k}[\overline{z}]$ as an algebra. The freeness also indicates that ${}_KH$ is faithfully flat. In other words, $K$ is a quantum homogeneous space of $H$ \cite[Proposition 3]{takeuchi1979relative} and $K = {}^{co \overline{H}}H = H^{co \overline{H}}$. The exact sequence $0\to K \xrightarrow{i} H \xrightarrow{\pi} \bar{H} \to 0$ of Hopf algebras
     follows directly \cite[Proposition 1.2.3]{AndruExtension}. By \cite[Theorem 1.3]{brown2015connected}, $\delta z \in K\otimes K$. Being more precisely, $\delta z \in K^+ \otimes K^+$ as $z\in H^+$. Thus $\Delta \overline{z} = \overline{z} \otimes 1 + 1\otimes \overline{z}$, and the following maps from $\overline{H}$ to $H$ are $\overline{H}$-comodule homomorphisms.
     \begin{equation*}
         \chi_H(\overline{z}^i) = z^i,\ \chi'_H(\overline{z}^i) = (-z)^i.
     \end{equation*}
     It is direct to check that $\chi_H(1_{\overline{H}}) = 1_H$ and $\varepsilon_H \chi_H = \varepsilon_{\overline{H}}$. And the convolution of $\chi_H$ and $\chi'_H$ is the unit in $\Hom(\overline{H},H)$.
     \begin{align*}
         \chi_H\star \chi'_H(\overline{z}^i) = \sum_{j=0}^i \C^j_i \chi_H(\overline{z}^j)\chi'_H(\overline{z}^{i-j}) = (\sum_{j=0}^i \C^j_i (-1)^{i-j}) z^i = 0, \forall i\geqslant 1.
     \end{align*}
     By Lemma \ref{LemmaCleft}, $H$ is a cleft extension of $K$ by $\overline{H}$, so there exist cocycle $\sigma$ and co-cocycle $\tau$ such that $H\cong K^{\tau}\#_{\sigma} \mathbb{k}[\overline{z}]$. By \cite[Theorem 3.2.1]{AndruExtension}, 
     \[\sigma(u,v) = \chi_H(u_1) \chi_H(v_1) \chi'_H(u_2v_2), \forall u,v\in \overline{H}. \]
     Then by the map $\chi_H$ and $\chi'_H$ given above, $\sigma(u,v) = \varepsilon(u) \varepsilon(v) 1_K$.
 \end{proof}
 
Note that $\mathbb{k}[\overline{z}]$ in the above lemma is the enveloping algebra of $\mathbb{k}\overline{z}$ as a Hopf algebra. So if $H$ is an IHOE over $U(P(H))$ with all steps being invariant, then $H$ could be realized as an ``iterated crossed product of enveloping algebras". Meanwhile, the explicit examples \cite{wang2015connected} of IHOEs over $U(P(H))$ of variant type could also be realized as crossed products of enveloping algebras by other means. We name a simplest one in the following example \cite[Example 4.2]{wang2015connected}.
\begin{example}
    Let $H$ be the algebra generated by $x,y,z$ subject to the following relations.
    \begin{align*}
        [x,y] &= y,\\
        [z,y] &= 0,\\
        [z,x] &= -z + \lambda y, \lambda \in \mathbb{k}.
    \end{align*}
    $H$ becomes a Hopf algebra with $x,y\in P(H)$ and $\delta z = x\otimes y - y\otimes x$.

    Then $K = \mathbb{k}[y]$ is a Hopf subalgebra of $H$ and $K^+H = HK^+$, which means the following sequence is an exact sequence of Hopf algebras:
    \begin{equation}\label{eqExactExampleWZZ}
        0\to K \to H \to \overline{H}=H/K^+H \to 0.
    \end{equation}
    To define a comodule map from $\overline{H}$ to $H$, we adjust the generator $z$ to be $z' = z + xy$. Obviously $\overline{z} = \overline{z'}$ and $\{\overline{x},\overline{z'}\}$ is a PBW generating set of $\overline{H}$. The following maps are convolution inverse of each other, and $\chi_H$ is a $\overline{H}$-comodule homomorphism.
    \begin{equation*}
        \chi_H(\overline{x}^i \overline{z'}^j) = x^i z'^j,\ \chi'_H(\overline{x}^i \overline{z'}^j) = (-1)^{i+j} z'^j x^i.
    \end{equation*}
    By definition, $\chi_H$ satisfies Lemma \ref{LemmaCleft}, and thus \eqref{eqExactExampleWZZ} is a cleft extension of Hopf algebras. Notice that in $\overline{H}$, the generator $\overline{z'}$ becomes a primitive element. So, as a Hopf algebra, $\overline{H}$ is actually the enveloping algebra of the non-trivial $2$-dimensional Lie algebra. The Hopf algebra $H$ is a crossed product of $2$ enveloping algebras.
\end{example}

\subsection{Umbrella algebras are iterated crossed products}
 Let us now focus on the Hopf algebra $H=\UM(r,2s)$. Notice that the subalgebra $K$ generated by $\{y_i\}_{i=1}^r\cup \{x_i\}_{i=0}^r$ and $K'$ generated by $\{x_i\}_{i=0}^r$ are both Hopf subalgebras. In the following proposition, we use the notation $\overline{y_i}$ (resp. $\overline{X_i}$) to denote $y_i+(K')^+K$ (resp. $X_i + K^+\UM(r,2s)$) in the quotient algebras.
\begin{proposition}\label{PropIterateUL}
    As a Hopf algebra, $H=\UM(r,2s)$ is isomorphic to an iterated crossed product of the enveloping algebras:
    \[H\cong (U(\mathfrak{L})^{\tau}\#_{\sigma}U(Y))\#U(\mathfrak{so}(B)),\]
    where $\mathfrak{L}$ is the Lie algebra spanned by $\{x_i\}_{i=0}^r$ with the Lie bracket given by \eqref{relation1} and \eqref{relationxx}, and $Y$ is the trivial Lie algebra spanned by $\{\overline{y_i}\}_{i=1}^r$. The Hopf algebra $U(Y)$ acts on $U(\mathfrak{L})$ trivially, and the cocycles are given by
\begin{align*}
    \sigma(\overline{y_i},\overline{y_j})=\left\{
    \begin{aligned}
        &-\frac{1}{3} x_0^3,\ i=2k, j=2k-1\\
         &0,\ otherwise.
    \end{aligned}
             \right.
\end{align*}
\[\tau(\overline{y_i}) = x_0 \otimes x_i - x_i \otimes x_0.\]
   The action of $U(\mathfrak{so}(B))$ on $U(\mathfrak{L})^{\tau}\#_{\sigma}U(Y)$ is defined as follows.
   \[\overline{M} \rightharpoonup (x_i\#\overline{y_j}) = \sum_{k=1}^r (-M_{ik}x_k\#\overline{y_j} - x_i\#M_{jk}\overline{y_k}). \]
\end{proposition}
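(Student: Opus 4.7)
The plan is to apply Lemma \ref{LemmaCleft} twice: first to realize $K=U(\mathfrak{L})^{\tau}\#_{\sigma}U(Y)$ as an inner cleft extension, and then to realize $H=K\,\#\,U(\mathfrak{so}(B))$ as an outer (untwisted) smash product. All cocycles and actions will be read directly off canonical PBW sections.

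First, I identify the relevant sub- and quotient Hopf algebras. Let $K'\subseteq K\subseteq H$ denote the subalgebras generated respectively by $\{x_i\}_{i=0}^{r}$ and by $\{x_i\}_{i=0}^{r}\cup\{y_j\}_{j=1}^{r}$. The coproducts \eqref{x-M-comultiplication}--\eqref{y-comultiplication} together with the reduction system of Example \ref{example} show that both are Hopf subalgebras with $K'=U(\mathfrak{L})$. Normality of $K'$ in $K$ is immediate, since $K'$ is central in $K$ by \eqref{relation1} and \eqref{relationxy}. Normality of $K$ in $H$ follows from \eqref{relationMx}--\eqref{relationMy}: each $M\in\mathfrak{so}(B)$ is primitive with $[M,K^{+}]\subseteq K^{+}$, and a short induction on length gives $U(\mathfrak{so}(B))\,K^{+}=K^{+}\,U(\mathfrak{so}(B))$. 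Inspecting the generators in the respective quotients yields short exact sequences of Hopf algebras
\[
0\to U(\mathfrak{L})\to K\to U(Y)\to 0,\qquad 0\to K\to H\to U(\mathfrak{so}(B))\to 0,
\]
in which the $\overline{y_i}$ are pairwise commuting primitives (the correction $\tfrac{1}{3}A_{ij}x_0^{3}$ dies modulo $(K')^{+}K$) and each $\overline{M}$ is primitive with its $\mathfrak{so}(B)$-bracket intact (since \eqref{relationlast} is preserved).

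Next I construct explicit cleft sections. For the inner sequence, take $\chi:U(Y)\to K$ defined on the PBW basis by $\chi(\overline{y_1}^{d_1}\cdots\overline{y_r}^{d_r})=y_1^{d_1}\cdots y_r^{d_r}$. This is unital, satisfies $\varepsilon_K\chi=\varepsilon_{U(Y)}$, and is a right $U(Y)$-comodule map, since $(1\otimes\pi)\Delta(y_i)=y_i\otimes 1+1\otimes\overline{y_i}$ (the terms $x_0\otimes x_i-x_i\otimes x_0$ being killed by $\pi$) and the extension to products is controlled by the centrality of $K'$. Lemma \ref{LemmaCleft} therefore delivers $K\cong U(\mathfrak{L})^{\tau}\#_{\sigma}U(Y)$. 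A short calculation with the formula of \cite[Theorem 3.2.1]{AndruExtension} (and the identity $\chi^{-1}(a)=-\chi(a)$ on primitives) shows that for primitive $a,b$ one has $\sigma(a,b)=\chi(a)\chi(b)-\chi(ab)$, so for $i>j$
\[
\sigma(\overline{y_i},\overline{y_j})\;=\;y_iy_j-y_jy_i\;=\;[y_i,y_j]\;=\;\tfrac{1}{3}A_{ij}x_0^{3},
\]
which, in the block form of $B$, is exactly the stated $\sigma$, while
\[
\tau(\overline{y_i})\;=\;\Delta_K\chi(\overline{y_i})-(\chi\otimes\chi)\Delta_{U(Y)}(\overline{y_i})\;=\;x_0\otimes x_i-x_i\otimes x_0.
\]
Triviality of the $U(Y)$-action on $U(\mathfrak{L})$ is exactly relation \eqref{relationxy}.

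For the outer sequence the crucial observation is that $\mathfrak{so}(B)\hookrightarrow P(H)$ is a Lie algebra map by \eqref{relationlast}, hence it extends to a Hopf algebra splitting $s:U(\mathfrak{so}(B))\to H$ of $\pi:H\to H/K^{+}H$. Taking $\chi_H:=s$ makes $\chi_H$ a Hopf map, hence automatically a right comodule map with $\varepsilon_H\chi_H=\varepsilon$, and the cleft cocycle and co-cocycle collapse to the trivial ones: the outer extension is a genuine smash product $K\,\#\,U(\mathfrak{so}(B))$ whose action is the adjoint action $\overline{M}\rightharpoonup k=[M,k]$. Expanding $[M,x_iy_j]=[M,x_i]y_j+x_i[M,y_j]$ via \eqref{relationMx}--\eqref{relationMy} reproduces the displayed formula on $x_i\#\overline{y_j}$. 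The main technical point is verifying that the PBW-section $\chi$ of the inner sequence is comodular on all basis monomials (not merely on generators) and that the two cleft structures nest consistently rather than mix; both reduce to the fact that every inner cocycle value lies in $U(\mathfrak{L})$, where the outer $\mathfrak{so}(B)$-action is governed by \eqref{relationMx} alone.
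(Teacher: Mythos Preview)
Your approach is essentially the same as the paper's: identify the chain $K'=U(\mathfrak{L})\subseteq K\subseteq H$ of Hopf subalgebras, establish the two exact sequences of Hopf algebras, define PBW-based comodule sections $\chi$, and read off $(\gamma,\sigma,\rho,\tau)$ from the formulas in \cite{AndruExtension}. Your treatment of the outer extension via the Hopf algebra splitting $s:U(\mathfrak{so}(B))\to H$ is exactly what the paper does as well (it observes that $\chi_H$ is multiplicative by the universal property of the enveloping algebra, forcing the outer cocycle to be trivial).

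There is one omission. Lemma~\ref{LemmaCleft} asks for condition~(2) of Definition~\ref{defCleft}, and that condition includes \emph{convolution invertibility} of $\chi$, which you never verify for the inner section $\chi:U(Y)\to K$. You later invoke ``$\chi^{-1}(a)=-\chi(a)$ on primitives'' as if the inverse were already in hand, which is circular. The paper closes this gap by writing down an explicit candidate $\chi'_K(\overline{y_1}^{\alpha_1}\cdots\overline{y_r}^{\alpha_r})=(-1)^{\sum\alpha_i}y_r^{\alpha_r}\cdots y_1^{\alpha_1}$ and checking $\chi_K\star\chi'_K=\varepsilon$ by a binomial computation. Alternatively you could simply note that $U(Y)$ is connected, so any unital map out of it is automatically convolution invertible; either way, a sentence is needed. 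Your heuristic formula $\tau(\overline{y_i})=\Delta_K\chi(\overline{y_i})-(\chi\otimes\chi)\Delta_{U(Y)}(\overline{y_i})$ gives the right answer on primitives but is not the formula from \cite{AndruExtension}; the paper instead constructs the left $K'$-module retraction $\xi_K$ and uses the expression $\tau(u)=\Delta_{K'}(\xi_K^{-1}(u_1))(\xi_K(u_2)\otimes\xi_K(u_3))$.
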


\begin{proof}
    Let $K$ be the subalgebra of $H$ generated by $\mathfrak{L}\cup Y$, and $K'$ be the one generated by $\mathfrak{L}$. By the definition of the coproduct and antipode given in Example \ref{example}, $K$ and $K'$ are both Hopf subalgebras of $H$. We claim that $K'\subseteq K\subseteq H$ are cleft extensions as Hopf algebras. By \cite[Proposition 3]{takeuchi1979relative}, both $K$ and $K'$ are quantum homogeneous space of $H$. Moreover, relations \eqref{relation1}-\eqref{relationlast} tells that $[H,K^+]\subseteq K^+$, $[K,(K')^+]\subseteq (K')^+$, which means $\overline{H}=H/K^+H$ and $\overline{K}=K/(K')^+K$ are also Hopf algebras. By \cite[Proposition 1.2.3]{AndruExtension}, $0\to K'\to K \to \overline{K}\to 0$ is an exact sequence of Hopf algebras, and so is $0\to K \to H \to \overline{H}\to 0$.
    
    Obviously,  $\overline{y_i} \in \overline{K}$ becomes a primitive element, and $[\overline{y_i},\overline{y_j}]\in \mathbb{k} \overline{x_0^3}=0$. 
    Then $\overline{K}=U(Y)$ with the trivial Lie bracket, and $\{\overline{y_i}\}_{i=1}^r$ is a PBW generating set of $\overline{K}$.
    The following $\overline{K}$-comodule maps  from $\overline{K}$ to $K$ are defined via the action the PBW basis.
    \begin{align*}
        \chi_K &:\overline{y_1}^{\alpha_1}\overline{y_2}^{\alpha_2}\cdots \overline{y_r}^{\alpha_r} \mapsto y_1^{\alpha_1}y_2^{\alpha_2} \cdots y_r^{\alpha_r},\\
        \chi'_K&: \overline{y_1}^{\alpha_1}\overline{y_2}^{\alpha_2}\cdots \overline{y_r}^{\alpha_r} \mapsto (-1)^{\sum \alpha_i} y_r^{\alpha_r}y_{r-1}^{\alpha_{r-1}} \cdots y_1^{\alpha_1}.
    \end{align*}
    Consider their convolution acting on an arbitrary basis element whose last letter is $y_t$:
    \begin{align*}
        ~&(\chi_K \star \chi'_K)(\overline{y_1}^{\alpha_1}\cdots \overline{y_t}^{\alpha_t}) \\
         =& \sum_{0\leqslant \beta_i\leqslant \alpha_i} 
(\Pi_{i\leqslant t} \C_{\alpha_i}^{\beta_i}) \chi_K(\overline{y_1}^{\beta_1}\cdots \overline{y_t}^{\beta_t}) \chi_K{}'(\overline{y_1}^{\alpha_1-\beta_1}\cdots \overline{y_t}^{\alpha_t-\beta_t}))\\
=&\sum_{0\leqslant \beta_i\leqslant \alpha_i} 
(\Pi_{i\leqslant t} \C_{\alpha_i}^{\beta_i}) y_1^{\beta_1}\cdots y_t^{\beta_t}\cdot (-1)^{\sum \alpha_i - \beta_i} y_t^{\alpha_t-\beta_t}\cdots y_1^{\alpha_t-\beta_t}\\
 = &\sum_{0\leqslant \beta_i \leqslant \alpha_i, i<t} (\sum_{\beta_t=0}^{\alpha_t} (-1)^{\alpha_t-\beta_t} \C_{\alpha_t}^{\beta_t}) \cdot (\Pi_{i<t} \C_{\alpha_i}^{\beta_i})  y_1^{\beta_1}\cdots y_t^{\alpha_t}\cdots y_1^{\alpha_1-\beta_1}\\
 =&0 = \varepsilon(\overline{y_1}^{\alpha_1}\cdots \overline{y_t}^{\alpha_t}), \forall \{\alpha_i\} \text{ such that } \alpha_i \text{ are not all 0}.
    \end{align*}
    So $\chi_K$ is a convolution invertible comodule map. By \cite[Theorem 3.2.1]{AndruExtension}, $K=K'\#_{\sigma}\overline{K}$ as an algebra, where
    $\sigma(u,v)=\chi_K(u_1)\chi_K(v_1)\chi'_K(u_2v_2)$ for any element $u,v\in U(Y)$, and $\overline{K}$ acts on $K'$ by \[\overline{y_i}\rightharpoonup x_j = \chi_K(\overline{y_i})_1 x_j \chi'_K(\overline{y_i})_2 = y_i x_j - x_j y_i =0.\]
    Note that $\chi_K(\overline{y_i}\overline{y_j}) = \chi_K(\overline{y_j}\overline{y_i})$, while $\chi_K(\overline{y_i})\chi_K(\overline{y_j})\ne \chi_K(\overline{y_j}) \chi_K(\overline{y_i})$ in general, which makes $\sigma$ a non-trivial twist. For example,
    \begin{align*}
    \sigma(\overline{y_{2k}},\overline{y_{2k-1}}) =& \chi_K(\overline{y_{2k}}) \chi_K(\overline{y_{2k-1}}) + \chi_K(\overline{y_{2k}}) \chi'_K(\overline{y_{2k-1}}) +\\ &\chi_K(\overline{y_{2k-1}}) \chi'_K(\overline{y_{2k}}) + \chi'_K(\overline{y_{2k}}\overline{y_{2k-1}}) \\
    =&y_{2k} y_{2k-1} - y_{2k}y_{2k-1} - y_{2k-1}y_{2k} + y_{2k}y_{2k-1}=-\frac{1}{3} x_0^3.\\
    \sigma(\overline{y_{2k-1}},\overline{y_{2k}})=&y_{2k-1}y_{2k} - y_{2k-1}y_{2k} - y_{2k}y_{2k-1} + y_{2k}y_{2k-1} = 0.
        \end{align*}

For the coproduct of $K$ we check that there is an invertible module map from $K$ to $K'$. By the PBW basis $K$ is a free left $K'$-module with basis $\{y_1^{\alpha_1}\dots y_r^{\alpha_r}\mid \alpha_i \in \mathbb{N}\}$. So as a $K'$-module, $K = K' \oplus K'\chi_K(\overline{K}^+)$. Thus there is a $K'$-module projection $\xi_K$ from $K$ to $K'$. Let $\xi'_K$ be $S\xi_K$. Denote the product on $K$ by $m_K$,
\begin{align*}
    &(\xi_K \star \xi'_K)(\Pi_{i=0}^r x_i^{\alpha_i} \Pi_{i=1}^r y_i^{\beta_i})\\
    =& m_K (\xi_K \otimes \xi'_K)(\Pi_{i=0}^r (x_i\otimes 1 + 1\otimes x_i)^{\alpha_i} \Pi_{i=1}^r(x_0 \otimes x_i - x_i \otimes x_0)^{\beta_i})\\
    =&m_K(\id \otimes S)(\Pi_{i=0}^r (x_i\otimes 1 + 1\otimes x_i)^{\alpha_i} \Pi_{i=1}^r(x_0 \otimes x_i - x_i \otimes x_0)^{\beta_i})
\end{align*}
Note that if $a,b \in K\otimes K$ and $a,b\in \ker(m\circ(id \otimes S))$, then $ab \in \ker(m\circ(id \otimes S))$. Obviously $x_i\otimes 1 + 1\otimes x_i$ and $(x_0\otimes x_i - x_i\otimes x_0)$ are in the kernel of $m\circ(id\otimes S)$. Thus 
$\xi_K \star \xi'_K = \varepsilon$, and so it $\xi'_K\star \xi_K$. By definition $\xi_K$ and $\chi_K$ satisfies \ref{defCleft}, so that $K$ is a (Hopf) cleft extension of the Hopf algebra $K'$ by the Hopf algebra $\overline{K}$. In other words, $K \simeq K'^{\tau}\#_{\sigma}\overline{K}$, where $\tau:\overline{K} \to K'\otimes K'$ is given by $\xi_K$ \cite[Proposition 3.2.9]{AndruExtension} as
\[\tau(u+K'{}^+K) = \Delta_{K'}(\xi_K^{-1}(u_1)) (\xi_K(u_2)\otimes \xi_K(u_3)) .\]
Then $\tau(\overline{y_i}) = x_0\otimes x_i - x_i \otimes x_0$.

    As for the quotient $\overline{H}$, it is isomorphic to the enveloping algebra of $\mathfrak{so}(B)$. We fix an ordered set of $\mathbb{k}$-basis on the Lie algebra and denote the element in $\overline{H}$ by $\overline{X}_1<\overline{X}_2<\dots<\overline{X}_n$. Then similarly a convolution invertible map can be defined from $\overline{H}$ to $H$:
    \begin{align*}
        \chi_H &:\overline{X_1}^{\alpha_1}\overline{X_2}^{\alpha_2}\cdots \overline{X_n}^{\alpha_n} \mapsto X_1^{\alpha_1}X_2^{\alpha_2} \cdots X_n^{\alpha_n},\\
        (\chi_H)^{-1}&: \overline{X_1}^{\alpha_1}\overline{X_2}^{\alpha_2}\cdots \overline{X_n}^{\alpha_n} \mapsto (-1)^{\sum \alpha_i} X_n^{\alpha_n}X_{n-1}^{\alpha_{n-1}} \cdots X_1^{\alpha_1}.
    \end{align*}
    Then $H=K\#_{\nu}\overline{H}$ as an algebra for some $\nu$ and the action of $\overline{H}$ on $K$ is 
    $$\overline{M}\rightharpoonup(x_i\#\overline{y_j})=[M,x_iy_j]=[M,x_i]\#\overline{y_j} + x_i\#\overline{[M,y_j]}.$$ 
    However it is easy to observe that $\chi_H|_{\mathfrak{so}(B)}$ is a Lie algebra injection from $\mathfrak{so}(B)$ to $P(H)$, so by the universal property of $U(\mathfrak{so}(B))$, $\chi_H(\overline{X_{i_1}}\overline{X_{i_2}}\cdots\overline{X_{i_t}})= X_{i_1}X_{i_2}\cdots X_{i_t}$ for any  $(i_1,\dots,i_t)$. Similarly $(\chi_H)^{-1} = S\circ \chi_H$. As a consequence, $\nu(u,v) = \varepsilon(u)\varepsilon(v)1_H$ for any $u,v\in \overline{H}$.

    On the other hand, a $K$-module projection $\xi_H$ from $H$ to $K$ is given by the module decomposition ${}_K H ={}_K K \oplus {}_K(K \chi_H(\overline{H}^+)) $ similarly to $\xi_K$. It it also invertible, and satisfies \ref{defCleft}. The twist induced by $\xi_H$ is trivial by direct calculation. So overall, $H \simeq K \# U(\mathfrak{so}(B))$ as a Hopf algebra.
\end{proof}
Naturally we ask the following question.
\begin{que}
    Let $H$ be an arbitrary connected Hopf algebra. Is there a finite number of Lie algebras $\mathfrak{g}_i$ such that $H$ is an iterated crossed product of the cocommutative Hopf algebras $\{U(\mathfrak{g}_i)\}$?
\end{que}
\subsection{Calabi-Yau property of the umbrella algebras}
By Theorem \ref{thmzhuang} any connected Hopf algebra $H$ of finite GK-dimension is skew Calabi-Yau, that is, $H$ is homological smooth (of dimension $d$), and there exists $\sigma\in \Aut(H)$ such that as $H$-$H$-bimodules,
\begin{align*}
    \Ext_{H^e}^i(H,H^e)\cong \left\{
    \begin{array}{rl}
        0 ,& i< d, \\
        {}^1 H^{\sigma}, & i=d.
    \end{array} \right.
\end{align*}
The automorphism $\sigma$ is called a Nakayama automorphism of $H$, and $H$ is called Calabi-Yau if $\sigma$ is an inner automorphism. The following proposition gives the Nakayama automorphism of $\UM(r,2s)$.
\begin{proposition}\label{PropCY}
    The Hopf algebra $H=\UM(r,2s)$ is a skew Calabi-Yau algebra with Nakayama automorphism
    \[\sigma(x_i)=x_i, \sigma(y_i)=y_i, \]
    \[\sigma(M) = M + (2-2s)\tr(M),\forall M\in \mathfrak{so}(B).\]
If $r=2s$ then $H$ is a Calabi-Yau algebra.
\end{proposition}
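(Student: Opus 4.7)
The plan is to apply the filtered deformation technique of Wu and Zhu \cite{wu2021nakayama}. By Theorem \ref{thmzhuang}, $H=\UM(r,2s)$ is a noetherian domain of finite global dimension, hence AS-regular and in particular skew Calabi--Yau. The relations \eqref{relation1}--\eqref{relationlast} show that $[F_mH,F_lH]\subseteq F_{m+l-1}H$ for all $m,l$, so as noted after Proposition \ref{commutator-corad-fil}, the polynomial algebra $\gr H$ inherits a Poisson bracket of degree $-1$. The Wu--Zhu theorem then reduces the computation of the Nakayama automorphism $\sigma$ of $H$ to the computation of the modular derivation $\phi$ of this Poisson structure, together with a lift to an algebra automorphism of $H$.

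I would first read off the Poisson brackets on the PBW generators directly from \eqref{relation1}--\eqref{relationlast}, and then compute $\phi$ via the coordinate divergence formula. On $\bar x_0$, $\bar x_i$, $\bar y_i$, each summand is either zero or is independent of the variable being differentiated, so $\phi$ vanishes on these generators. On a generator $\bar X_\beta\in b(\mathfrak{so}(B))$ the computation splits into three pieces: an $x$-part equal to $\tr(X_\beta)$, a $y$-part equal to $\tr(X_\beta)$, and a Lie-theoretic part which, up to the sign convention fixed by Wu--Zhu, reduces to $\tr(\ad_{X_\beta}|_{\mathfrak{so}(B)})$.

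The main technical step is then to compute $\tr(\ad_{X_\beta}|_{\mathfrak{so}(B)})$ explicitly. I would use the block decomposition from the proof of Lemma \ref{lemmalie}: as a vector space
\[\mathfrak{so}(B)\;=\;\mathfrak{sp}_{2s}\;\oplus\;M_{2s,r-2s}(\mathbb{k})\;\oplus\;\mathfrak{gl}_{r-2s}(\mathbb{k}),\]
and $\ad_{X_\beta}$ is block-triangular with respect to this decomposition, so its trace equals the sum of the traces of the three diagonal blocks. The blocks on $\mathfrak{sp}_{2s}$ and $\mathfrak{gl}_{r-2s}$ contribute zero (since $\tr(\ad_A)=0$ on a semisimple Lie algebra and on $\mathfrak{gl}_n$), while the middle block $Y^{12}\mapsto X^{11}Y^{12}-Y^{12}X^{22}$ on $M_{2s,r-2s}(\mathbb{k})$ has trace $(r-2s)\tr(X^{11})-2s\tr(X^{22})=-2s\tr(X_\beta)$, using $\tr(X^{11})=0$ for $X^{11}\in\mathfrak{sp}_{2s}$ and $\tr(X_\beta)=\tr(X^{22})$. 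Assembling the three contributions yields $\phi(\bar X_\beta)=(2-2s)\tr(X_\beta)$.

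Finally, one checks directly that $\sigma(x_i)=x_i$, $\sigma(y_i)=y_i$, $\sigma(M)=M+(2-2s)\tr(M)$ defines an algebra automorphism of $H$---the added scalars are central, and $\tr([M,N])=0$ preserves the relation \eqref{relationlast}---and the Wu--Zhu comparison identifies this automorphism with the Nakayama automorphism $\sigma$. For the Calabi--Yau statement, when $r=2s$ Lemma \ref{lemmalie} gives $\mathfrak{so}(B)\cong\mathfrak{sp}_{2s}(\mathbb{k})$; since $\Omega^2=-I$, every $M\in\mathfrak{sp}_{2s}(\mathbb{k})$ satisfies $\tr(M)=\tr(\Omega M\Omega)=-\tr(M)$, whence $\tr(M)=0$, so $\sigma=\id$ and $H$ is Calabi--Yau.
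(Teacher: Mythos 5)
Your argument follows the same route as the paper's: realize $H$ as a filtered deformation of the Poisson polynomial algebra $\gr H$ with its degree $-1$ bracket, compute the modular derivation on the PBW generators, and reduce everything to $\tr(\ad_M|_{\mathfrak{so}(B)})$. Your block-triangular evaluation of that trace (zero on the $\mathfrak{so}(B')$ and $\mathfrak{gl}_{r-2s}$ diagonal blocks, $(r-2s)\tr(M^{11})-2s\tr(M^{22})=-2s\tr(M)$ on the off-diagonal block) is correct and is just a reorganization of the paper's basis computation $\sum_{i,j}(e_{ij})^*([M,e_{ij}])=(r-2s)\tr(M)-r\tr(M^{22})$; both use $\tr(M^{11})=0$.

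The one genuine gap is the final sentence ``the Wu--Zhu comparison identifies this automorphism with the Nakayama automorphism.'' As the paper uses it, the Wu--Zhu theorem pins down $\sigma$ exactly only on the degree-one part: $\sigma(a)=a+\phi_{\eta}(a)$ for $a\in P(H)$. On the degree-two generators it gives only $\sigma(y_i)=y_i+\phi_{\eta}(y_i)+\alpha_i$ with an undetermined scalar $\alpha_i\in F_0H=\mathbb{k}$. Verifying that your candidate map is an algebra automorphism does not exclude $\alpha_i\neq 0$ for the actual Nakayama automorphism (indeed $y_i\mapsto y_i+\alpha_i$ preserves all relations except possibly \eqref{relationMy}). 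The paper closes this by applying the Nakayama automorphism to the relation $[y_i,M]=\sum_k M_{ik}y_k$: the added scalars drop out of the commutator, which forces $\sum_k M_{ik}\alpha_k=0$ for every $M\in\mathfrak{so}(B)$ and every $i$, and the full-rank element $\hat B=B+\sum_{i=2s+1}^{r}e_{ii}\in\mathfrak{so}(B)$ then forces all $\alpha_i=0$. You need this (or an equivalent) step to conclude $\sigma(y_i)=y_i$; the rest of your proof, including the tracelessness of $\mathfrak{sp}_{2s}$ and the Calabi--Yau conclusion for $r=2s$, is sound.
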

\begin{proof}
    By Theorem \ref{thmzhuang}, the associated graded algebra $\gr H$ of $H(=\UM(B)=\UM(r,2s))$ with respect to the coradical filtration is a commutative polynomial ring. As the elements $\overline{y_i}$ lie in $(\gr H)_2$ and the principle symbols of other elements in $\mathcal{G}$ lie in $(\gr H)_1$, relations \eqref{relation1}-\eqref{relationlast} ensure that $\gr H=\mathbb{k}[\overline{\mathcal{G}}]$. Then $H$ fits in the framework of \cite[Theorem 0.1]{wu2021nakayama}, that is, $H$ is a filtered deformation of the Poisson polynomial algebra $(\gr H,\{-, -\})$, where
    \[\{a+F_{m-1}H,b+F_{n-1}H\}=[a,b]+F_{m+n-2}H, \forall a \in F_mH, b\in F_nH.\] 
    It follows that the Nakayama automorphism $\sigma$ of $H$ satisfies that:
    \[\sigma(a)-a = \phi_{\eta}(a),\forall a \in P(H),\]
    \[\sigma(y_i)-y_i - \phi_{\eta}(y_i) = \alpha_i \in F_0 H,\forall 1\leqslant i\leqslant r\]
    where $\phi_{\eta}:  H \to \mathbb{k}, a \mapsto \textrm{Hdet} \{\overline{a},-\}$,  and $\textrm{Hdet}\{\overline{a},-\}$  is the homological determinant of the Hamilton derivation $\{\overline{a},-\}$ on the Poisson polynomial algebra.
    In our cases, for any $a\in \mathcal{G}$, $\phi_{\eta}(a)$ is exactly the trace of $\{\overline{a},-\}$ acting on $\overline{\mathcal{G}}$, see \cite[Corollary 0.5, Proposition 0.7]{wu2021nakayama} for details. By direct calculation,
    \begin{align*}
    \phi_{\eta}(x_i) &= 0,& \forall 0\leqslant i \leqslant r\\
    \phi_{\eta}(y_i) &= 0, &\forall 1\leqslant i \leqslant r\\
        \phi_{\eta}(M) &= 2\tr (M) + \tr(\ad_{\mathfrak{so}(B)} M), &\forall M\in \mathfrak{so}(B).
    \end{align*}
    Let $B'$ be the submatrix of $B$ of full rank $2s$, then $\mathfrak{so}(B')$ is a simple Lie algebra. Recall from Lemma \ref{lemmalie} that  $\mathfrak{so}(B)$ consists of the block matrices $\left(\begin{smallmatrix}
        M^{11} & M^{12}\\ 0 & M^{22}
    \end{smallmatrix}\right)$, where $M^{11}\in \mathfrak{so}(B')$ and $M^{12},M^{22}$ are arbitrary matrices. Pick a linear basis 
    \[b:=b(\mathfrak{so}(B'))\cup\{e_{ij}\}_{\substack{1\leqslant i \leqslant r,\\ 2s+1\leqslant j \leqslant r}}.\]
    For a matrix $M=\sum_{X\in b} \alpha_X X$, we denote $X^*(M)=\alpha_X$. Then by direct calculation, 
    \begin{align*}
        \tr(\ad_{\mathfrak{so}(B)} M) =& \sum_{X\in b} X^*([M,X])\\
        =& \tr(\ad_{\mathfrak{so}(B')} M^{11}) + \sum_{\substack{1\leqslant i \leqslant r,\\ 2s+1\leqslant j \leqslant r}}(e_{ij})^*([M,e_{ij}])\\
        =& 0 + \sum_{\substack{1\leqslant i \leqslant r,\\ 2s+1\leqslant j \leqslant r}} (M_{ii}-M_{jj})\\
        =&(r-2s) \tr(M) - r \tr(M^{22}).
    \end{align*}
    However by definition, $M^{11}B'$ is a symmetric matrix. Then $M^{11}=(M^{11}B')(B')^{-1}$ is a product of a symmetric matrix and an anti-symmetric one, whose trace must be 0. As a consequence, $\tr(M)=\tr(M^{22})$, and $\phi_{\eta}(M)=(2-2s)\tr(M)$.

      On the other hand, $\sigma$ is an algebra automorphism, so it has to satisfy relations \eqref{relation1}-\eqref{relationlast}. Especially, for any element $M\in \mathfrak{so}(B)$ and $1\leqslant i \leqslant r$,
    \[\sum_j M_{ij}\sigma(y_{j})=\sigma([M,y_i])=[\sigma(M),\sigma(y_i)]=[M+2\tr(M),y_i + \alpha_i]=\sum_j M_{ij}y_{j}. \]
    Let $\hat{B}=B + \sum_{i=2s+1}^r e_{ii} $, which lies in $ \mathfrak{so}(B)$. Since $\hat{B}$ is of full rank, $\sigma(y_i)-y_i$ is $0$ for any $i$.
    
    If $r=2s$, then $\tr(M)=\tr(M^{11})=0.$ The Nakayama automorphism $\sigma$ acts on the generators trivially, which makes $H$ a Calabi-Yau algebra.
\end{proof}

\bibliographystyle{amsalpha}
\bibliography{bib.bib}

\end{document}